\newtheorem{theorem}{Theorem}
\newtheorem{corollary}[theorem]{Corollary}
\newtheorem{lemma}[theorem]{Lemma}
\newtheorem{proposition}[theorem]{Proposition}
\newenvironment{proof}[1][Proof]{\noindent\textbf{#1.} }{\ \rule{0.5em}{0.5em}}
\newtheorem{asmptn}[theorem]{Assumption}
\def\fE {{\mathbf E}}
\def\fP {{\mathbf P}}
\def\to{\rightarrow}
\def\q{\quad}
\def\phi{\varphi}
\DeclareMathOperator{\var}{Var}
\journal{Stochastic Processes and their Applications}
\begin{document}

\begin{frontmatter}

%% Title, authors and addresses

%% use the tnoteref command within \title for footnotes;
%% use the tnotetext command for the associated footnote;
%% use the fnref command within \author or \address for footnotes;
%% use the fntext command for the associated footnote;
%% use the corref command within \author for corresponding author footnotes;
%% use the cortext command for the associated footnote;
%% use the ead command for the email address,
%% and the form \ead[url] for the home page:
%%
%% \title{Title\tnoteref{label1}}
%% \tnotetext[label1]{}
%% \author{Name\corref{cor1}\fnref{label2}}
%% \ead{email address}
%% \ead[url]{home page}
%% \fntext[label2]{}
%% \cortext[cor1]{}
%% \address{Address\fnref{label3}}
%% \fntext[label3]{}

\title{Subcritical branching processes in random environment without Cramer
condition}

%% use optional labels to link authors explicitly to addresses:
%% \author[label1,label2]{<author name>}
%% \address[label1]{<address>}
%% \address[label2]{<address>}

%\author{Vladimir Vatutin\fnref{label1}, and Xinghua Zheng\corref{cor1}\fnref{label2}}
\author[VV]{Vladimir Vatutin\fnref{label1}}\ead{vatutin@mi.ras.ru}
\author[XZ]{Xinghua Zheng\corref{cor1}\fnref{label2}}\ead{xhzheng@ust.hk}

\address[VV]{Department of Discrete Mathematics, Steklov Mathematical Institute, 8 Gubkin
Street, 119\,991 Moscow, Russia\fnref{label1}}
\fntext[label1]{Research partially
supported by the Russian Foundation for Basic Research, grant 11-01-00139.}
\address[XZ]{Department of ISOM,
Hong Kong University of Science and Technology,
Clear Water Bay, Kowloon, Hong Kong SAR\fnref{label2}}
\fntext[label2]{Research partially supported
by GRF 606010 of the HKSAR.}

\cortext[cor1]{Tel: +852 2358 7750; Fax: +852 2358 2421}

\begin{abstract}
A subcritical branching process in random environment (BPRE) is considered whose associated random walk does not satisfy the Cramer condition. The asymptotics for the survival probability of the process is investigated, and a Yaglom type conditional limit theorem is proved for the number of particles up to moment $n$ given survival to this moment. Contrary to other types of subcritical BPRE, the limiting distribution is not discrete. We also show that the process survives for a long time owing to a single big jump of the associate random walk accompanied by a population explosion at the beginning of the process.

\end{abstract}

\begin{keyword}
%% keywords here, in the form: keyword \sep keyword

 Branching process \sep random environment\sep random walk\sep
 survival probability\sep functional limit theorem

%% MSC codes here, in the form: \MSC code \sep code
%% or \MSC[2008] code \sep code (2000 is the default)

%{\sc 2010 AMS Classification numbers}:
\MSC[2010] 60J80\sep 60K37\sep 60G50\sep 60F17.

%{\sc Key words:}  Branching process, random environment, random walk,
% survival probability, functional limit theorem
\end{keyword}

\end{frontmatter}

% \linenumbers

%% main text

\section{Introduction and Main Results}

In this paper we consider the asymptotic behavior of a type of subcritical
branching processes in random environment. More specifically, the random
environment is given by a sequence of independent and identically
distributed (i.i.d.) probability distributions on nonnegative integers,
denoted by $\pi =\left\{ \mathbf{\pi }_{n},\;n\geq 0\right\}$ where
\begin{equation*}
\mathbf{\pi }_{n}=\left\{ \pi _{n}^{\left( 0\right) },\pi _{n}^{\left(
1\right) },\pi _{n}^{\left( 2\right) },...\right\} ,\;\pi _{n}^{\left(
i\right) }\geq 0,\;\sum_{i=0}^{\infty }\pi _{n}^{\left( i\right) }=1,\;
\end{equation*}%
which are defined on a common probability space $(\Omega ,\mathcal{A},\fP)$.
Moreover, for a given environment $\pi =\left\{ \mathbf{\pi }_{n}\right\} $,
the branching process $\left\{ Z_{n},\;n\geq 0\right\} $ satisfies
\begin{equation*}
Z_{0}=1,\;\fE_{\pi }\left[ s^{Z_{n+1}}|\;Z_{0},Z_{1},...,Z_{n}\right]
=\left( f_{n}\left( s\right) \right) ^{Z_{n}},
\end{equation*}%
where $%\begin{equation*}
f_{n}\left( s\right) =\sum_{i=0}^{\infty }\pi _{n}^{\left( i\right) }s^{i}$
% \end{equation*}
is the generating function of $\pi _{n}$; in other words, $\pi _{n}$ is the
(common) offspring distribution for the particles at generation $n$. Here
and below we use the subscript~$\pi $ to indicate that the expectation (or
probability with the notation~$\fP_{\pi }$) is taken under the given
environment $\pi$. As is shown in various articles on the branching
processes in random environment, the asymptotic behavior of $\{Z_{n}\}$ is
crucially affected by the so-called \textit{associated random walk} $%
\{S_{n}\}$ defined as follows:
\begin{equation*}
S_{0}=0,\;S_{n}:=X_{1}+X_{2}+\cdot \cdot \cdot +X_{n},\;n\geq 1,
\end{equation*}%
where
\begin{equation*}
X_{n}=\log f_{n-1}^{\prime }\left( 1\right) ,n=1,2,\ldots .
\end{equation*}%
are the logarithmic mean offspring numbers. For notational ease let
\begin{equation*}
f\left( s\right) =f_{0}\left( s\right) ,\quad \mbox{and}\quad X=X_{1}.
\end{equation*}%
We call $\{Z_{n}\}$ a \textit{subcritical} branching process in random
environment if
\begin{equation}
\mathbf{E}\left[ X\right] =:-a<0.  \label{Expect}
\end{equation}%
By the SLLN this implies that the associated random walk $\{S_{n}\}$
diverges to $-\infty $ almost surely, which, in view of the inequality $%
\mathbf{P}_{\pi }\left( Z_{n}>0\right) \leq \mathbf{E}_{\pi }\left[ Z_{n}%
\right] =e^{S_{n}},$ leads to almost sure extinction of $\{Z_{n}\}$ .

Subcritical branching processes in \hbox{i.i.d.} random environment have
been considered in a number of articles, see, for instance, \citet{Afanasyev80},
\citet{De88}, \cite{Liu96}, \cite{DH}, \cite{Afanasyev98}, \cite{FV99},
\cite{Afanasyev01}, \cite{GuLi01}, \cite{GKV03}, \cite{Vatutin03}, \cite%
{AGKV05}, \cite{Bansaye08}, \cite{Bansaye09}, \cite{ABKV10}, and \cite{ABKV11}.
According to these papers, a subcritical branching process in random
environment is called \emph{weakly} subcritical if there exists $\theta \in
(0,1)$ such that $\mathbf{E}\left[ Xe^{\theta X}\right] =~0;$ \emph{%
intermediately} subcritical if $\mathbf{E}\left[ Xe^{X}\right] =0;$ and
\emph{strongly} subcritical if $\mathbf{E}\left[ Xe^{X}\right] <0.$

The classification above is not exhaustive, though. An important exception is
that the random variable~$X$ is such that $\mathbf{E}\left[ Xe^{\theta X}%
\right] =\infty $ for any $\theta >0$, and this is the focus of the present
paper. To be more specific, we suppose that $\sigma ^{2}:=\var(X)<\infty $
and, in addition, following the custom of writing $f\sim g$ to mean
that the ratio $f/g$ converges to 1, we have, as $x\rightarrow \infty $,
\begin{equation}
A(x):=\mathbf{P}\left( X>x\right) \sim \frac{l(x)}{x^{\beta }}, \q
%\begin{footnote} {We will follow the custom of writing $f\sim g$ to mean
%that the ratio $f/g$ converges to 1.}\end{footnote}
\mbox{for some  }\beta
>2,  \label{Ctail1}
\end{equation}%
where $l(x)$ is a function slowly varying at infinity. Thus, the random
variable $X$ does not satisfy the Cramer condition.

We will see below that for the non-Cramer case, similarly to  other cases,
the asymptotics of the survival probability and the growth of the population
size given survival are specified  in main by the behavior of the associated
random walk. However, the influence of the associated random walk for the
non-Cramer case has essentially different nature: for the Cramer cases, the
survival for a long time happens due to the ``atypical'' behavior of the \emph{whole} trajectory of the associated random walk that results in its smaller,
then usually, slope for the strongly subcritical case (\cite{AGKV05}), in its
convergence to a Levy process attaining its minimal value at the end of the
observation interval for the intermediately subcritical case (\cite{ABKV11}),
and in the positivity of its essential part for the weakly subcritical case
(\cite{ABKV10}). For the non-Cramer case, the process survives for a long time
owing to a \emph{single} big jump of the associated random walk at
the beginning of the evolution which, in turn, is accompanied by an explosion
of the population size at this moment; see Lemmas  \ref{lemma:U_n} and \ref{lemma:Z_U_n} for the
precise information. Besides, the number of particles at a
distant moment $n$ given its survival up to this moment tends to infinity
for the non-Cramer case, while for the other types of subcritical processes
in random environment such conditioning leads to discrete limiting
distributions with no atoms at infinity.

One of our assumptions is the following (technical) %\newline
%\noindent\textbf{Condition $C1$}:
condition for $A(x)$: for any fixed $h>0$,
\begin{equation}
A(x+h) - A(x)
=-\frac{h\beta A(x)}{x}(1+o(1))\mbox{   as   }x\rightarrow \infty.
\label{remainder}
\end{equation}
Next, for any offspring distribution $\widetilde{\pi}=\{\widetilde{\pi}
^{(i)}: i \geq 0\}$ with generating function $\widetilde{f}(s)$, denote
\begin{equation}  \label{dfn:eta}
\eta (\widetilde{\pi})=\frac{\sum_{i=0}^{\infty} i(i-1)\widetilde{\pi}^{(i)}%
} {2(\sum_{i=0}^{\infty }i \widetilde{\pi}^{(i)})^{2}}=\frac{\widetilde{f}%
^{\prime \prime }(1)}{2\left( \widetilde{f}^{\prime}(1)\right) ^{2}}.
\end{equation}
Introduce the following
\begin{asmptn}
\label{asmptn:eta}
\begin{enumerate}[(i)]
\item There exists $\delta >0$ such that, as $x\rightarrow \infty $,
\begin{equation*}
\mathbf{P}\left( \eta(\pi_0) >x\right) =o\left( \frac{1}{\log x\times \left(
\log \log x\right) ^{1+\delta }}\right) .
\end{equation*}

\item
As $x\rightarrow \infty $, (under probability $\mathbf{P}$,)
\begin{equation}
\mathbf{\mathcal{L}}\left( f\left( 1-e^{-x}\right) |X>x\right) \Longrightarrow \mathcal{L}(\gamma),  \label{G1}
\end{equation}
where $\gamma $ is a random variable which is less than $1$ with a positive
probability.
\end{enumerate}
\end{asmptn}

It can be shown that if  $\pi_0$ is either almost surely a Poisson
distribution or almost surely a geometric distribution, and if \eqref{Ctail1} is satisfied, then \eqref{G1} holds with
$\gamma \equiv 0$. Moreover,
it is not difficult to give an example of branching processes in random
environment where $\gamma $ is either positive and less than 1 with
probability 1, or random with support not concentrated at~1. Indeed, let $%
\gamma $ be a random variable taking values in $[0,1-\delta ]\subset \left[
0,1\right] $ for some $\delta \in (0,1]$, and $p$ and~$q,p+q=1,pq>0,$ be
random variables independent of $\gamma $ such that the random variable $
X:=\log \left( 1-\gamma \right) +\log (p/q) $
meets conditions (\ref{Expect}) and~(\ref{Ctail1}). Define $
f(s):=\gamma +\left( 1-\gamma \right) q/(1-ps).$
Then
$f^{\prime }(1)=\left( 1-\gamma \right) p/q=\exp({X}), $
and it is straightforward to show that %
for any $\varepsilon \in \left( 0,1\right) $,
\begin{equation*}
\lim_{x\rightarrow \infty }\mathbf{P}\left( |f(1-e^{-x})-\gamma | \geq \varepsilon \,|\, X>x\right)
\leq \lim_{x\rightarrow \infty }\mathbf{P}\left( X-x\leq -\log \varepsilon \,|\, X>x\right) =0,
\end{equation*}%
therefore \eqref{G1} holds.

Let us briefly explain Assumption 1(ii). For any fixed environment $\pi$ and
any $x>0$, let $\mathcal{L}_{\pi }\left( Z_{1}e^{-x}\right) $ be the
distribution of $Z_{1}e^{-x}$. Note that
this actually only depends on $\pi_0$. We will show in Lemma \ref%
{lemma:char_gamma} that if condition~(\ref{Ctail1}) holds, then (\ref{G1})
is equivalent to the following assumption, concerning weak convergence of \emph{random} measures: (under probability $\mathbf{P}$,)
\begin{equation*}
\mbox{ conditional on $\{X>x\}$, } \mathcal{L}_{\pi }\left(
Z_{1}e^{-x}\right) \Longrightarrow \gamma \delta _{0}+\left( 1-\gamma
\right) \delta _{\infty } \quad\mbox{as } x\rightarrow \infty, % \label{G2}
\end{equation*}
where $\delta _{0}$ and $\delta _{\infty }$ are measures assigning unit
masses to the corresponding points.

In what follows we assume that \textit{the distribution of} $X$ \textit{is
nonlattice}. The case when the distribution of $X$ is lattice needs natural
changes related to the local limit theorem that we use in our proofs (see
Proposition~\ref{Tlocal} below).

Define
\begin{equation}
f_{k,n}\left( s\right) :=f_{k}(f_{k+1}(...(f_{n-1}\left( s\right)
)...)),\;0\leq k\leq n-1,\quad\mbox{and}\quad f_{n,n}\left( s\right) :=s.
\label{DefFF}
\end{equation}
When $k=0$, $f_{0,n}(s)=E_{\pi}(s^{Z_n})$ is the conditional probability generating function of $Z_n$.

The following is our first main theorem which deals with the the survival
probability of the process.

\begin{theorem}
\label{Textinction}Assume conditions \eqref{Expect}, \eqref{Ctail1}, %
\eqref{remainder} and Assumption $\ref{asmptn:eta}$. Then the survival
probability of the process $\{Z_n\}$ has, as $n\rightarrow \infty$, the
asymptotic representation
\begin{equation*}
\mathbf{P}\left( Z_{n}>0\right) \sim K\mathbf{P}\left( X>na\right),
%%%~\label{AsSurviv}
\end{equation*}
where%
\begin{equation}
K:=\sum_{j=0}^{\infty }\mathbf{E}\left[ 1-f_{0,j}\left( \gamma \right) %
\right] \in (0,\infty),  \label{DefK}
\end{equation}%
and $\gamma $ is a random variable that has the same distribution as the $%
\gamma $ in Assumption $\ref{asmptn:eta} ($ii$)$ and is independent of the
underlying environment $\{\pi _{n}\}$ $($and consequently of $\{f_{0,j}\}$ $%
) $.
\end{theorem}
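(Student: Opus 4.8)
The plan is to establish the two one--sided estimates
\[
\liminf_{n\to\infty}\frac{\mathbf{P}(Z_{n}>0)}{\mathbf{P}(X>na)}\ \ge\ K
\qquad\text{and}\qquad
\limsup_{n\to\infty}\frac{\mathbf{P}(Z_{n}>0)}{\mathbf{P}(X>na)}\ \le\ K ,
\]
both built around the heuristic that, up to lower order, survival to generation $n$ coincides with the event that at some \emph{bounded} generation $j$ the increment $X_{j+1}$ is roughly of size $na$: given such a jump, each of the $Z_{j}$ families alive at generation $j$ survives the resulting explosion with (conditional) probability tending to $1-\gamma$, which is the content of Assumption~\ref{asmptn:eta}(ii), and an exploded family --- being of order $e^{na}$ --- then survives the remaining, subcritical, $n-j$ generations with probability tending to $1$. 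A preliminary ingredient used throughout for tail control is the crude a priori bound $\mathbf{P}(Z_{m}>0)\le C\,m\,\mathbf{P}(X>ma)$ valid for all $m$; I would deduce it from $\mathbf{P}_{\pi}(Z_{m}>0)\le e^{S_{m}}\wedge1$ together with a Nagaev--type large deviation estimate $\mathbf{P}(S_{m}\ge0)\asymp m\,\mathbf{P}(X>ma)$ and a companion bound for $\mathbf{E}[e^{S_{m}};S_{m}<0]$, using \eqref{remainder} to absorb $O(1)$ shifts in the argument of $A$. Because $\beta>2$ this yields $\sum_{m}m\,\mathbf{P}(X>ma)<\infty$, and it is this summability that the hypothesis $\beta>2$ is really needed for.

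For the \textbf{lower bound}, fix $N$ and, for $0\le j\le N$, let $B_{j}$ be the event that $|X_{i}|\le M$ and $Z_{i}\le L$ for $1\le i\le j$, that $X_{j+1}$ exceeds $na$ by a slowly growing margin (the exact threshold is immaterial since it is $na+o(na)$ and $A$ is regularly varying), and that at least one of the $Z_{j}$ families present at generation $j$ explodes at step $j+1$ and then has descendants at generation $n$. The events $B_{j}$ are disjoint, so $\mathbf{P}(Z_{n}>0)\ge\sum_{j\le N}\mathbf{P}(B_{j})$. Conditioning on $\pi_{0},\dots,\pi_{j-1}$ --- which carry $Z_{j}$ and $X_{1},\dots,X_{j}$ and are independent of $\pi_{j},\pi_{j+1},\dots$ --- I would compute $\mathbf{P}(B_{j})$ from: the probability $\sim\mathbf{P}(X>na)$ of the jump; Assumption~\ref{asmptn:eta}(ii), which on this event forces the extinction probability $f_{j,n}(0)=f_{j}\bigl(f_{j+1,n}(0)\bigr)$ of a single generation-$j$ particle to converge in distribution to $\gamma$ (here the a priori bound for the shifted environment is used to check that $1-f_{j+1,n}(0)=\mathbf{P}_{\pi}(\text{survival from }j+1)$ is much larger than $e^{-X_{j+1}}$, so that one may read off the limit $\gamma$ rather than a strictly larger value); and the fact that $\mathbf{P}(|X_{i}|\le M,\ Z_{i}\le L\ \forall i\le j)\to1$ as $n\to\infty$ and then $M,L\to\infty$. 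This gives, for each fixed $j$, $\mathbf{P}(B_{j})=(1+o(1))\,\mathbf{E}[1-f_{0,j}(\gamma)]\,\mathbf{P}(X>na)$, with $\gamma$ independent of the environment for exactly the reason just indicated. Summing over $j\le N$, letting $n\to\infty$, then $N\to\infty$, and invoking $\sum_{j\ge0}\mathbf{E}[1-f_{0,j}(\gamma)]=K$ completes the lower bound; combined with $K<\infty$ from the a priori bound, this also gives $K\in(0,\infty)$.

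For the \textbf{upper bound}, fix $h_{n}=n^{\alpha}$ with $\tfrac12+\tfrac1\beta<\alpha<1$ --- an interval nonempty precisely because $\beta>2$ --- and split $\{Z_{n}>0\}$ according to which of $X_{1},\dots,X_{n}$ exceed $h_{n}$. If none does then $\mathbf{P}_{\pi}(Z_{n}>0)\le e^{S_{n}}\wedge1$ with $S_{n}$ a sum of increments truncated at $h_{n}$, and a Bernstein/Chernoff bound with exponent $\gtrsim n^{1-\alpha}$ makes this contribution $o(\mathbf{P}(X>na))$; the event that two or more of them exceed $h_{n}$ has probability at most $n^{2}A(h_{n})^{2}=o(\mathbf{P}(X>na))$ by the choice of $\alpha$. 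On the event that $X_{j+1}$ is the unique increment exceeding $h_{n}$, I would freeze everything but $\pi_{j}$ and use
\[
\mathbf{P}_{\pi}(Z_{n}>0)=\mathbf{E}_{\pi}\bigl[\,1-f_{j,n}(0)^{Z_{j}}\,\bigr]\ \le\ \min\!\Bigl(\mathbf{P}_{\pi}(Z_{j}>0),\ e^{S_{j}}\bigl(1-f_{j+1,n}(0)\bigr)e^{X_{j+1}}\Bigr),
\]
together with $1-f_{j+1,n}(0)\le e^{\,X_{j+2}+\cdots+X_{n}}$, where the exponent concentrates near $-(n-j-1)a$ since those increments are $\le h_{n}$; integrating $X_{j+1}$ over $\{X_{j+1}>h_{n}\}$ then bounds the contribution of a big jump at generation $j$ by $\mathbf{P}(Z_{j}>0)\,A((n-j-1)a)$ up to super--polynomially small errors. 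For $j\le N$ this equals $(1+o(1))\,\mathbf{E}[1-f_{0,j}(\gamma)]\,\mathbf{P}(X>na)$ (again by Assumption~\ref{asmptn:eta}(ii), now with a matching lower bound for $1-f_{j+1,n}(0)$); for $N<j<n^{\alpha}(\log n)^{2}$ the a priori bound together with $\sum_{j>N}j\,\mathbf{P}(X>ja)\to0$ (here once more $\beta>2$) and $A((n-j-1)a)\sim A(na)$ bounds the total by $\varepsilon(N)\,\mathbf{P}(X>na)$ with $\varepsilon(N)\to0$; and for $j\ge n^{\alpha}(\log n)^{2}$ the event of surviving $j$ generations with no increment above $h_{n}$ is already super--polynomially small. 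Letting $n\to\infty$, then $N\to\infty$, and finally the auxiliary margins $\to0$ (replacing $A(na(1\pm\varepsilon))$ by $A(na)$ via regular variation and \eqref{remainder}) gives the upper bound.

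The \textbf{main obstacle} is the middle step of the upper bound: verifying that, conditionally on a genuine big jump $X_{j+1}\gtrsim na$ at an early generation, $f_{j,n}(0)$ really converges to $\gamma$ and not to some strictly larger limit, and that jumps of intermediate size $h_{n}<X_{j+1}\ll na$, or big jumps at late generations, contribute negligibly. This requires a sharp two--sided control of $\mathbf{P}_{\pi}(Z_{m}>0)$ for a subcritical environment in terms of $\min_{k\le m}S_{k}$ and the quantities $\eta(\pi_{k})$ --- which is where Assumption~\ref{asmptn:eta}(i) on the tail of $\eta(\pi_{0})$ enters --- together with the local limit theorem (Proposition~\ref{Tlocal}) for the precise probability that the associated walk, after the single jump, lands near $0$ at time $n$; condition \eqref{remainder} is then what upgrades the resulting approximations to the exact constant $K$. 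The dual claim, that an exploded family of size $\approx e^{na}$ survives the remaining generations with probability $\to1$, is easy, since that size is doubly exponentially larger than the (only polynomially small) reciprocal of the survival probability of the remaining environment.
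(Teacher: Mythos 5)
Your overall strategy is sound and reaches the correct answer, but the upper-bound machinery is genuinely different from what the paper does, so it is worth comparing. The paper splits $\{Z_{n}>0\}$ according to the \emph{exact} first time $U_{n}$ that an increment exceeds $na$ together with the argmin $\tau_{n}$ of the walk; the two remainder terms are controlled by a Baxter--Spitzer identity (Lemma~\ref{LExponent}, which feeds into Lemma~\ref{LRrem} and Corollary~\ref{C_minfinite}) and by imported sharp random-walk results (Proposition~\ref{Ttail}, the Borovkov--Borovkov tail asymptotics for $\mathbf{P}(\tau>n)$, and Proposition~\ref{Tdurr}, Durrett's conditional law of $U_{n}$ given $\tau>n$). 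You instead split on how many increments exceed the moderate threshold $h_{n}=n^{\alpha}$ with $\tfrac12+\tfrac1\beta<\alpha<1$, kill the ``no big jump'' and ``two big jumps'' cases with a truncated Chernoff estimate and a union bound, and handle the ``one big jump at a late generation'' range by combining a Nagaev-type a priori bound $\mathbf{P}(Z_{m}>0)\lesssim m\,\mathbf{P}(X>ma)$ (summable since $\beta>2$) with a second Chernoff estimate on the constrained walk. This replaces the fluctuation-theoretic inputs (Baxter identity, Borovkov, Durrett) by more elementary large-deviation estimates; the price is that you have to be careful with the truncated moment generating function (you only need one-sided control since $X_{i}\le h_{n}$, and the variance stays finite, so this does go through), and you need the bound to be kept \emph{with} the constraint $\{X_{i}\le h_{n},\,i\le j\}$ when $j$ is of order $n$, since otherwise $\mathbf{P}(Z_{j}>0)\,A((n-j-1)a)$ is not small near $j=n$ --- you do flag this, but it is the most delicate point of your scheme. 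The identification of the main term for fixed $j$ (conditioning on the environment up to generation $j$, passing the law of $f_{j}(f_{j+1,n}(0))$ through Assumption~\ref{asmptn:eta}(ii), and using the two-sided control of $1-f_{j+1,n}(0)$ from the analogue of Lemma~\ref{lemma:typical_surv_prob} to justify reading off $\gamma$) is essentially the paper's Lemma~\ref{lemma:jump_surv_prob} and display~\eqref{RRR}, just packaged with extra $|X_{i}|\le M$, $Z_{i}\le L$ truncations that the paper avoids by applying dominated convergence directly over $k=Z_{j-1}$. Both approaches rest on the same conceptual picture (survival comes from a single early big jump plus the characterization of $\gamma$); yours is more self-contained and classical in its large-deviation toolkit, while the paper's is shorter because it leans on the quoted heavy-tailed random-walk asymptotics.
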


As we mentioned earlier, if $\pi_0$ is either almost surely a Poisson
distribution or almost surely a geometric distribution, then \eqref{Ctail1} implies \eqref{G1} with $\gamma \equiv
0 $, so the constant $K$ becomes $\sum_{j=0}^{\infty }\mathbf{P}(Z_{j}>0)$.
In this case we can give the following intuitive explanation of Theorem \ref{Textinction}:
Let
%\begin{equation*}
%S_{0}=0,\;S_{n}:=X_{1}+X_{2}+\cdot \cdot \cdot +X_{n},\; n\geq 1
%\end{equation*}%
%be the so-called associated random walk for our branching process,
%and let
\begin{equation}
U_{n}=\inf \left\{ j:X_{j}>na\right\}  \label{dfn:U_n}
\end{equation}%
be the first time when the increment of the random walk $S:=\{S_{j},j\geq
0\} $ exceeds~$na$. Then one can show that the event $\{Z_{n}>0\}$ is
asymptotically equivalent to $\{U_{n}<n,Z_{U_{n}-1}>~0\}=\cup
_{j<n}\{Z_{j-1}>0,U_{n}=j\}$. Now for each fixed $j\ge 1$, $\mathbf{P}%
(Z_{j-1}>0,U_{n}=j)$ $\sim$ $\mathbf{P}(Z_{j-1}>0)\mathbf{P}(X>na)$, and
hence, not rigorously,
\begin{equation*}
\mathbf{P}(Z_{n}>0)\sim \mathbf{P}(U_{n}<n,Z_{U_{n}-1}>0)\sim
\sum_{j=1}^{\infty }\mathbf{P}(Z_{j-1}>0)\cdot \mathbf{P}(X>na)=K\mathbf{P}%
(X>na).
\end{equation*}%
In fact, we may say more: (even in the general case when $\gamma \not\equiv
0 ,$) the process survives owing to one big jump of the associated random
walk which happens at the very beginning of the evolution of the process;
moreover, the big jump is accompanied by a population explosion which leads
to survival. See Lemmas \ref{lemma:U_n} and \ref{lemma:Z_U_n} for the
precise information.

The next result gives a Yaglom type conditional limit theorem for the number
of particles up to moment $n$ given survival of the process to this moment.
Recall that $\sigma^2 = \var(X)$, and $U_n$ is defined in \eqref{dfn:U_n}.

\begin{theorem}
\label{thm:FLT}Assume conditions \eqref{Expect}, \eqref{Ctail1}, %
\eqref{remainder} and Assumption~$\ref{asmptn:eta}$. Then for any $j\geq 1$,
\begin{equation*}
\lim_{n\rightarrow \infty }\mathbf{P}(U_{n}=j\ |\ Z_{n}>0)=
\mathbf{E}
(1-f_{0,j-1}(\gamma ))/K.
\end{equation*}%
Moreover,
\begin{equation}
\mathcal{L}\left( \left. \frac{Z_{[nt]\vee {U_{n}}}}{Z_{{U_{n}}}\exp
(S_{[nt]\vee {U_{n}}}-S_{{U_{n}}})},\ 0\leq t\leq 1\right\vert
Z_{n}>0\right) \Longrightarrow \left( 1,\ 0\leq t\leq 1\right) ,  \label{TT}
\end{equation}%
\begin{equation*}
\mathcal{L}\left( \left. \frac{1}{\sigma \sqrt{n}}\left( \log \left(
Z_{[nt]\vee {U_{n}}}/Z_{U_{n}}\right) +nta\right),\ 0\leq t\leq 1\right\vert
Z_{n}>0\right) \Longrightarrow (B_{t},\ 0\leq t\leq 1),
\end{equation*}%
and for any $\varepsilon >0$,
\begin{equation*}
\mathcal{L}\left( \left. \frac{1}{\sigma \sqrt{n}}\left( \log \left(
Z_{[nt]}/Z_{[n\varepsilon ]}\right) +n(t-\varepsilon )a\right),\ \varepsilon
\leq t\leq 1\right\vert Z_{n}>0\right) \Longrightarrow (B_{t}-B_{\varepsilon
},\ \varepsilon \leq t\leq 1),
\end{equation*}%
where the symbol $\Longrightarrow $ means weak convergence in the space $%
D[0,1]$ or $D[\varepsilon ,1]$ endowed with Skorokhod topology, and $B_{t}$
is a standard Brownian motion.
\end{theorem}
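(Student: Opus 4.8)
\emph{Strategy and the first assertion.}
The plan is to work throughout under $\mathbf{P}(\,\cdot\mid Z_n>0)$ and to exploit that, by the discussion following Theorem~\ref{Textinction} and by Lemmas~\ref{lemma:U_n} and~\ref{lemma:Z_U_n}, survival to time $n$ is produced by a single big increment $X_{U_n}>na$ at a bounded time $U_n$, after which the population $Z_{U_n}$ has exploded (it exceeds $e^{X_{U_n}}$, hence $e^{na}$, with conditional probability tending to $1$) while the shifted walk $(S_j-S_{U_n})_{j\ge U_n}$ behaves as an ordinary random walk of drift $-a$. To get the first claim, write $\{U_n=j\}=\{X_1\le na,\dots,X_{j-1}\le na,\ X_j>na\}$; on this event $(\pi_0,\dots,\pi_{j-2},Z_{j-1})$ carries asymptotically its unconditional law and is independent of $\pi_{j-1}$, and $\mathbf{P}(X_j>na)=A(na)$. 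Conditioning on $\{X_j>na\}$, Assumption~\ref{asmptn:eta}(ii) (equivalently Lemma~\ref{lemma:char_gamma}) together with the fact that $1-f_{j,n}(0)=\mathbf{P}(\widetilde Z_{n-j}>0)$ is only polynomially small, hence $\gg e^{-na}$, forces $f_{j-1,n}(0)=f_{j-1}\bigl(f_{j,n}(0)\bigr)\to\gamma$ in distribution with $\gamma$ independent of $f_{0,j-1}$; therefore
\[
\mathbf{P}(U_n=j,\ Z_n>0)=\mathbf{E}\bigl[\mathbf{1}\{U_n=j\}\,(1-f_{0,j-1}(f_{j-1,n}(0)))\bigr]\sim\mathbf{E}\bigl[1-f_{0,j-1}(\gamma)\bigr]\,A(na).
\]
Dividing by $\mathbf{P}(Z_n>0)\sim K\,A(na)$ yields the first statement, and summing over $j$ (tightness of $U_n$ under the conditioning being part of Lemma~\ref{lemma:U_n}) reproduces $\sum_{j\ge1}\mathbf{E}[1-f_{0,j-1}(\gamma)]=K$.

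\emph{The freezing step.}
Set $W^*_m:=Z_m/\bigl(Z_{U_n}\exp(S_m-S_{U_n})\bigr)$ for $m\ge U_n$; this is exactly the quantity in \eqref{TT}, and the claim there amounts to $\sup_{0\le t\le1}|W^*_{[nt]\vee U_n}-1|\to0$ in $\mathbf{P}(\,\cdot\mid Z_n>0)$-probability. Fix $j$. On $\{U_n=j\}$ decompose $Z_{j+m}=\sum_{i=1}^{Z_j}Z^{(i)}_m$ into $Z_j$ independent subtrees driven by the typical i.i.d.\ environment $\pi_j,\pi_{j+1},\dots$; then $(W^*_{j+m})_{m\ge0}$ is a nonnegative $\mathbf{P}_\pi$-martingale with $W^*_j=1$, and the classical second-moment identity for branching processes in varying environment gives
\[
\var_\pi(W^*_{j+m})\ \le\ \frac{C}{Z_j}\sum_{l=0}^{m-1}\bigl(1+\eta(\pi_{j+l})\bigr)\,e^{-(S_{j+l}-S_j)}\ =\ \frac{C}{Z_j}\,e^{-(S_{j+m}-S_j)}\,R_{j,m},
\]
where $R_{j,m}=\sum_{k\ge1}\bigl(1+\eta(\pi_{j+m-k})\bigr)\exp(S_{j+m}-S_{j+m-k})$ is a convergent perpetuity — since $\mathbf{E}[X]=-a<0$ and $\mathbf{E}\log^{+}\eta(\pi_0)<\infty$, the latter following from Assumption~\ref{asmptn:eta}(i) — and hence is tight uniformly in $j,m$. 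Because $-(S_{j+m}-S_j)\le a(n-j)+C\sqrt n\log n$ uniformly in $m\le n-j$ with high probability, and because on $\{Z_n>0\}$ one has $Z_j\ge e^{(1+\delta)na}$ whenever $X_j>(1+\delta)na$ with conditional probability tending to $1$ (Lemma~\ref{lemma:Z_U_n}), we obtain on $\{X_j>(1+\delta)na\}$ that $\sup_{m\le n-j}\var_\pi(W^*_{j+m})\le C R_{j,n-j}\,e^{-aj-\delta na+C\sqrt n\log n}\to0$; Doob's $L^2$-inequality then gives $\mathbf{E}_\pi\bigl[\sup_{j\le m\le n}(W^*_m-1)^2\bigr]\le4\var_\pi(W^*_n)\to0$. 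On the complement $\{na<X_j\le(1+\delta)na\}$, whose conditional probability is $\le1-(1+\delta)^{-\beta}+o(1)$, we simply discard the event; summing over $j$ and letting $\delta\downarrow0$ establishes \eqref{TT}.

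\emph{From freezing to the functional limits.}
For $[nt]\ge U_n$ we have $\log\bigl(Z_{[nt]}/Z_{U_n}\bigr)=(S_{[nt]}-S_{U_n})+\log W^*_{[nt]}$, and crucially the big increment $X_{U_n}$ is \emph{not} among $X_{U_n+1},\dots,X_{[nt]}$. On $\{X_{U_n}>(1+\delta)na\}$ the conditional law of $(X_{j+1},\dots,X_n)$ given $\{Z_n>0\}$ is asymptotically the i.i.d.\ product law, because $\mathbf{P}_\pi(Z_n>0\mid Z_j,\pi_j,\dots)=1-f_{j,n}(0)^{Z_j}\to1$ in probability once $Z_j\,(1-f_{j,n}(0))\gtrsim e^{(1+\delta)na}\,e^{S_n-S_j}/R_{j,n-j}\to\infty$. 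Hence Donsker's theorem gives $\sigma^{-1}n^{-1/2}\bigl(S_{[nt]}-S_{U_n}+a([nt]-U_n)\bigr)\Longrightarrow(B_t)$ in $D[0,1]$, while $\sigma^{-1}n^{-1/2}\bigl(nta-a([nt]-U_n)\bigr)=O(U_n/\sqrt n)\to0$ and $\sigma^{-1}n^{-1/2}\log W^*_{[nt]}\to0$ uniformly by the freezing step, and for $[nt]<U_n$ the corresponding path value is at most $aU_n/(\sigma\sqrt n)\to0$. Adding these up and letting $\delta\downarrow0$ yields the second display of the theorem; the third display is obtained in the same way with $[n\varepsilon]$ in place of $U_n$ as base point — the near-origin ramp is then absent and $[n\varepsilon]\gg U_n$, so the big jump is again excluded and the freezing step lets us discard $\log(W^*_{[nt]}/W^*_{[n\varepsilon]})$.

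\emph{Main obstacle.}
The delicate point is the freezing step: a priori $Z_{U_n}$ is only of order $e^{X_{U_n}}$ with $X_{U_n}$ just barely exceeding $na$, while the second-moment functional of the martingale grows geometrically as $e^{a(n-U_n)}$, so on the small-probability set where the overshoot $X_{U_n}-na$ stays bounded the conditional variance of $W^*_n$ need not vanish. Handling this — splitting off the overshoot at level $(1+\delta)na$, and showing via Lemmas~\ref{lemma:U_n} and~\ref{lemma:Z_U_n} that the exceptional set is negligible and that off it both the explosion size $Z_{U_n}$ and the post-jump survival probability are large enough to make the conditioning on $\{Z_n>0\}$ asymptotically trivial for the subsequent environment — is the technical heart of the argument.
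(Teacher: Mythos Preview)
Your overall architecture matches the paper's: establish the law of $U_n$ given survival, prove a martingale ``freezing'' statement via a second-moment bound, and deduce the Brownian limits from Donsker applied to the post-jump walk. The second-moment identity you write for $\var_\pi(W^*_{j+m})$ is exactly the one behind Lemma~\ref{lemma:var}, and your final paragraph identifies the genuine issue correctly.

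Where your argument differs from the paper is in how the conditioning on $\{Z_n>0\}$ is handled, and this is where your write-up has loose ends. The paper does not work directly under $\mathbf{P}(\,\cdot\mid Z_n>0)$ for the freezing and Donsker steps. Instead it proves (Lemma~\ref{lemma:conditioning}) that $\mathbf{P}(\,\cdot\mid Z_n>0)$ and $\mathbf{P}(\,\cdot\mid U_n<h_n,\,N_{U_n}\ge e^{n(a+\delta_n)})$ are asymptotically equivalent in total variation, for a \emph{sequence} $\delta_n\to0$ with $n(\delta_n-2/\log n)\to\infty$. The advantage is that the latter event is measurable with respect to $(\pi_0,\dots,\pi_{U_n-1};Z_0,\dots,Z_{U_n})$, so under this alternate conditioning the post-$U_n$ environment is \emph{genuinely} i.i.d.\ and independent of $(U_n,Z_{U_n})$. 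All typical-environment estimates (the events $G_n'\cap H_n'$ of \eqref{dfn:Gn_Hn}, the FCLT for the shifted walk) are then available without further argument, and $Z_{U_n}\ge e^{n(a+\delta_n)}$ is part of the conditioning, not something to be justified.

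Your route --- fix $\delta>0$, restrict to $\{X_{U_n}>(1+\delta)na\}$, argue there, discard the complement, and let $\delta\downarrow0$ --- can be made to work, but as written it has two defects. First, you invoke the bound $-(S_{j+m}-S_j)\le a(n-j)+C\sqrt n\log n$ ``with high probability'' inside the freezing step, before you have argued that the post-jump environment is asymptotically unconditioned under $\{Z_n>0\}$; that justification only appears in the next paragraph, so the logic is circular as ordered. Second, Lemma~\ref{lemma:Z_U_n} does not say that $Z_{U_n}\ge e^{(1+\delta)na}$ on $\{X_{U_n}>(1+\delta)na\}$; it says $N_{U_n}\ge e^{n(a+\delta_n)}$ for the specific sequence $\delta_n$, which is both weaker and already sufficient. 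Replacing your fixed-$\delta$ splitting by the paper's sequence $\delta_n$ and its total-variation lemma removes the need for the $\delta\downarrow0$ limit and resolves both issues at once.
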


Therefore after the population explosion at time $U_{n}$, the population
drops exponentially at rate~$a$, with a fluctuation of order $\exp (O(\sqrt{k%
}))$ with $k$ the number of generations elapsed after the explosion.
Moreover, it follows from \eqref{TT} and the continuous mapping theorem that
\begin{equation*}
\mathcal{L}\left( \left. \log \left( Z_{[nt]\vee {U_{n}}}/Z_{U_{n}}\right)
-(S_{[nt]\vee {U_{n}}}-S_{{U_{n}}}),0\leq t\leq 1\right\vert Z_{n}>0\right)
\Longrightarrow (0,0\leq t\leq 1),
\end{equation*}%
and, therefore, after the big jump, at the logarithmic level the
fluctuations of the population are \emph{completely} described by the
fluctuations of the associated random walk.

\section{Some Preliminary Results}

We list some known results for the random walk $S$ and establish some new
ones.

Define
\begin{equation*}
M_{n}=\max_{1\leq k\leq n}S_{k,}\quad L_{n}=\min_{0\leq k\leq n}S_{k},~\tau
_{n}=\min \left\{ 0\leq k\leq n:S_{k}=L_{n}\right\} ,
\end{equation*}%
\begin{equation*}
\tau (x)=\inf \left\{ k>0:\ S_{k}<-x\right\} ,\quad x\geq 0,
\end{equation*}%
and $\tau =\tau (0)=\inf \left\{ k>0:\ S_{k}<0\right\} $. Further, let
\begin{equation*}
D:=\sum_{k=1}^{\infty }\frac{1}{k}\mathbf{P}\left( S_{k}\geq 0\right) ,
\end{equation*}%
which is clearly finite given conditions \eqref{Expect} and \eqref{Ctail1}.

\begin{proposition}
\label{Ttail} $[$\citet[Theorems 8.2.4, page 376]{BB2005}$]$ Under conditions
\eqref{Expect} and \eqref{Ctail1}, as $n\rightarrow \infty $,
\begin{equation*}
\mathbf{P}\left( L_{n}\geq 0\right) =\mathbf{P}\left( \tau >n\right) \sim
e^{D}\mathbf{P}\left( X>an\right).
\end{equation*}
\end{proposition}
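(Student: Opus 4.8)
The equality $\mathbf{P}(L_n\ge0)=\mathbf{P}(\tau>n)$ is immediate, since $L_n\ge0$ means $S_k\ge0$ for all $0\le k\le n$ and $S_0=0$. For the asymptotics the plan is to establish matching lower and upper bounds whose common constant is $\mathbf{E}[\tau]$; that $\mathbf{E}[\tau]=e^{D}$ I would get from the Sparre--Andersen/Spitzer identity $\sum_{n\ge0}\mathbf{P}(\tau>n)z^n=\exp\big(\sum_{k\ge1}k^{-1}z^k\mathbf{P}(S_k\ge0)\big)$ by letting $z\uparrow1$, which is legitimate because \eqref{Expect} forces $\tau<\infty$ a.s.\ and $\mathbf{E}[\tau]=\sum_{n\ge0}\mathbf{P}(\tau>n)=e^D<\infty$. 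The heuristic behind both bounds is the single-big-jump principle: conditional on $\{\tau>n\}$, with probability tending to $1$ some early increment $X_j$ exceeds $\approx an$, after which the walk sits so high that staying nonnegative for the remaining $\approx n$ steps is essentially automatic; weighting the cost $\mathbf{P}(X>an)$ of that jump by the probability $\mathbf{P}(\tau>j-1)$ that the walk survives up to time $j$, and summing over $j$, produces $\mathbf{E}[\tau]\,\mathbf{P}(X>an)$.

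For the lower bound I would fix $m$ and, for $1\le j\le m$, consider the event $\{S_1\ge0,\dots,S_{j-1}\ge0\}\cap\{X_i\le\sqrt n,\ i<j\}\cap\{X_j>an+n^{3/4}\}\cap\{\min_{j\le k\le n}(S_k-S_j)\ge-a(n-j)-n^{3/4}\}$. These events are pairwise disjoint (the constraints on the $j$th increment are incompatible for distinct $j$) and on each one $S_k\ge aj\ge a>0$ for $j\le k\le n$ and $S_k\ge0$ for $k<j$, so each lies in $\{\tau>n\}$. By independence of the three blocks of increments, regular variation of $A$ (so $\mathbf{P}(X>an+n^{3/4})\sim\mathbf{P}(X>an)$), and the Kolmogorov/Doob maximal inequality applied to the centered post-$j$ walk (whose fluctuations over $\le n$ steps are $O(\sqrt n)=o(n^{3/4})$, using $\var(X)<\infty$), the $j$th event has probability $(1+o(1))\mathbf{P}(\tau>j-1)\,\mathbf{P}(X>an)$; summing and then sending $m\to\infty$ gives $\liminf_n\mathbf{P}(\tau>n)/\mathbf{P}(X>an)\ge\sum_{j\ge0}\mathbf{P}(\tau>j)=e^{D}$.

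For the upper bound I would fix a sufficiently small $\delta>0$ and call $X_i$ \emph{big} if $X_i>n^{1-\delta}$. The event $\{\tau>n\}$ with no big increment is contained in $\{\sum_{i\le n}(X_i\wedge n^{1-\delta})\ge0\}$, a large deviation for a sum of i.i.d.\ bounded variables of mean $\le-a/2$, so a Chernoff bound (with parameter of order $n^{-(1-\delta)}$) bounds it by $e^{-cn^{\delta}}=o(\mathbf{P}(X>an))$. The event with two or more big increments has probability at most $\binom n2\mathbf{P}(X>n^{1-\delta})^2=o(\mathbf{P}(X>an))$, where $\beta>2$ is used. On the remaining event there is exactly one big increment, say $X_j$; conditioning on $X_1,\dots,X_{j-1}$ and on $R:=-\min_{0\le l\le n-j}(S_{j+l}-S_j)$ (independent of the former), the requirement $S_k\ge0$ for $k\in[j,n]$ becomes $X_j\ge R-S_{j-1}$, so this event contributes at most $\sum_j\mathbf{E}\big[\mathbf{1}\{\tau>j-1,\ X_i\le n^{1-\delta}\ \forall i<j\}\,\mathbf{P}(X>n^{1-\delta}\vee(R-S_{j-1}))\big]$. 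For $j\le n^{1-\delta/2}$ the post-$j$ walk has $\sim n$ steps, so $R\ge a(n-j)-\varepsilon n$ and $S_{j-1}\le\varepsilon n$ except on an event of probability $O(1/n)$ uniformly in such $j$ (again using $\var(X)<\infty$ and $\beta>2$), whence the $j$th term is at most $\big((1-2\varepsilon/a)^{-\beta}+o(1)\big)\mathbf{P}(X>an)\,\mathbf{P}(\tau>j-1)$ up to a negligible remainder, and summation over $j$ gives $\le\big((1-2\varepsilon/a)^{-\beta}+o(1)\big)e^{D}\mathbf{P}(X>an)$; for $j>n^{1-\delta/2}$ the walk cannot survive $[1,j-1]$ without a big increment except with probability $\le e^{-cn^{\delta/2}}$, so that range is negligible. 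Letting $\varepsilon\downarrow0$ gives $\limsup_n\mathbf{P}(\tau>n)/\mathbf{P}(X>an)\le e^{D}$, which together with the lower bound finishes the proof.

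The step I expect to be the main obstacle is the ``exactly one big increment'' term in the upper bound: turning the crude estimate above into the sharp factor $(1+o(1))\mathbf{P}(X>an)$ uniformly in $j$ requires uniform control of the lower tail of the descent depth $R$, of the pre-jump value $S_{j-1}$, and of possible secondary ``medium'' jumps before time $j$, all of which rely on the finite variance $\beta>2$. This bookkeeping is the technical heart of the statement and is carried out in full in \citet[Theorem 8.2.4]{BB2005}, which the present proposition simply invokes.
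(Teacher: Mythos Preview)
The paper gives no proof of this proposition; it is quoted directly from \citet[Theorem~8.2.4]{BB2005}, so there is nothing in the paper to compare your argument against. Your sketch is a faithful outline of the single--big--jump method that underlies the cited result: the Sparre--Andersen identification $\mathbf{E}[\tau]=e^{D}$ is correct, the lower bound via disjoint first--big--jump events is clean, and the trichotomy in the upper bound (no big increment / at least two / exactly one) is the standard decomposition.

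One point in the upper bound is more delicate than your phrasing suggests. You say the $j$th ``exactly one big jump'' term equals the main contribution plus a remainder coming from an exceptional event of probability $O(1/n)$. But a uniform $O(1/n)$ bound alone, combined with the crude estimate $\mathbf{P}(X>n^{1-\delta})$ and summed over the $n^{1-\delta/2}$ values of $j$, yields a term of order $n^{-\delta/2}\,\mathbf{P}(X>n^{1-\delta})$, and since
\[
\frac{n^{-\delta/2}\,\mathbf{P}(X>n^{1-\delta})}{\mathbf{P}(X>an)}\asymp n^{\delta(\beta-1/2)}\to\infty,
\]
this is \emph{not} $o(\mathbf{P}(X>an))$. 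What actually makes the remainder negligible is that (i) the deviation in $R$ is independent of $\{\tau>j-1\}$, so its contribution carries the summable weight $\mathbf{P}(\tau>j-1)$ and totals $O(n^{-1})\,e^{D}\,\mathbf{P}(X>n^{1-\delta})$, which is $o(\mathbf{P}(X>an))$ provided $\delta<1/\beta$; and (ii) the deviation $\{S_{j-1}>\varepsilon n\}$, under the truncation $X_i\le n^{1-\delta}$, is governed by a one--sided Bernstein/Bennett bound and has probability $\exp(-c\,n^{\delta})$, not merely $O(1/n)$. You flag precisely this step as the main obstacle and defer to \citet{BB2005}, which is exactly what the paper itself does.
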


Next, let $Y=X+a$. Then $Y$ is a random variable with nonlattice
distribution, and with zero mean and finite variance. Moreover, as $%
x\rightarrow \infty $, the function%
\begin{equation*}
B(x):=\mathbf{P}(Y>x)\ (=\mathbf{P}(X>x-a) = A(x-a))\ \sim \frac{l(x)}{%
x^{\beta }}, \quad \beta >2,
\end{equation*}%
and satisfies a modified version of \eqref{remainder} by replacing $A(x)$
with $B(x)$.

\begin{proposition}
\label{Tlocal} %(see \citet[Theorem 4.7.1, page 218]{BB2005})
$[$\citet[Theorem 4.7.1, page 218]{BB2005}$]$
Assume \eqref{Ctail1} and \eqref{remainder}. Then
with $\tilde{S}_{n}:=Y_{1}+...+Y_{n},$ where $Y_{i}\overset{d}{=}Y$ and
independent, we have for any $h>0$, uniformly in
$x\geq N\sqrt{n\log (n+1)}$,
as $N\rightarrow \infty $,
\begin{equation*}
\mathbf{P}\left( \tilde{S}_{n}\in \lbrack x,x+h)\right) =\frac{h\beta nB(x)}{x}(1+o(1)).
\end{equation*}
\end{proposition}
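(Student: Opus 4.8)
The plan is to establish this integro-local (Stone-type) large-deviation estimate by the classical \emph{principle of a single big jump}: when $x\ge N\sqrt{n\log(n+1)}$, the event $\{\tilde S_n\in[x,x+h)\}$ should coincide, up to a negligible error, with the event that exactly one summand $Y_i$ lands in an interval of width $h$ about $x$, while the other $n-1$ summands contribute only a fluctuation of order $\sqrt n=o(x)$. Two features of the range $x\ge N\sqrt{n\log(n+1)}$ will be used repeatedly. First, if $\rho:=\sigma\sqrt{sn}$ with $s>0$ fixed, then $\rho/x\le\sigma\sqrt s/(N\sqrt{\log 2})\to0$ as $N\to\infty$ \emph{uniformly in $n$}, so by the uniform convergence theorem for regularly varying functions $B(x-r)/(x-r)=(B(x)/x)(1+o(1))$ uniformly for $|r|\le\rho$; moreover $\mathbf{P}(\max_{i\le n}Y_i>\varepsilon x)\le nB(\varepsilon x)=o(1)$ in this range, for any fixed $\varepsilon>0$. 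Second, a Gaussian factor $\exp(-cx^2/n)$ is then at most $n^{-cN^2}$, which for $N$ large beats the merely polynomially small target $nB(x)/x\asymp n\,l(x)x^{-\beta-1}$; this is where the factor $\log(n+1)$ is essential. Fix once and for all a small $\varepsilon\in(0,(\beta-2)/(\beta-1))$, and write $R:=Y_2+\cdots+Y_n$.

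\textbf{Lower bound.} I would keep only configurations with a single large summand: since the events $\{Y_i>\varepsilon x\}\cap\bigcap_{j\ne i}\{Y_j\le\varepsilon x\}$ are pairwise disjoint, symmetry gives
\[
\mathbf{P}(\tilde S_n\in[x,x+h))\ \ge\ n\,\mathbf{P}\Bigl(Y_1+R\in[x,x+h),\ |R|\le\rho,\ \max_{j\ge 2}Y_j\le\varepsilon x\Bigr).
\]
Conditioning on $R$ (independent of $Y_1$): on $\{|R|\le\rho\}$ one has $Y_1\ge x-R\ge x-\rho\ge\varepsilon x$ for $N$ large (so that constraint is free) and $R/x\to0$ uniformly, whence by the $B$-analogue of \eqref{remainder} and the uniform convergence theorem, $\mathbf{P}(Y_1\in[x-r,x+h-r))=(h\beta B(x)/x)(1+o(1))$ uniformly for $|r|\le\rho$. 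Since $\mathbf{P}(|R|\le\rho)\ge1-1/s$ by Chebyshev and $\mathbf{P}(\max_{j\ge2}Y_j>\varepsilon x)=o(1)$, this yields $\mathbf{P}(\tilde S_n\in[x,x+h))\ge(1-1/s-o(1))\,h\beta nB(x)/x$; letting $N\to\infty$ and then $s\to\infty$ gives the lower half.

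\textbf{Upper bound.} Here I would split, by a union bound over which summand (if any) exceeds $\varepsilon x$,
\[
\mathbf{P}(\tilde S_n\in[x,x+h))\le\mathbf{P}\Bigl(\tilde S_n\in[x,x+h),\ \max_{i\le n}Y_i\le\varepsilon x\Bigr)+n\,\mathbf{P}\bigl(\tilde S_n\in[x,x+h),\ Y_1>\varepsilon x\bigr),
\]
and decompose the second term on $R$. On $\{|R|\le\rho\}$, conditioning on $R$ exactly as in the lower bound gives the bound $n(h\beta B(x)/x)(1+o(1))$. On $\{|R|>\rho\}$: whenever $Y_1\in[x-r,x+h-r)$ and $Y_1>\varepsilon x$, the relevant interval lies in $[\max(x-r,\varepsilon x),\infty)$ with $\max(x-r,\varepsilon x)\ge\varepsilon x/2$ for $x$ large, so the local estimate $\mathbf{P}(Y\in[z,z+h))\le 2h\beta B(z)/z$, the monotonicity of $z\mapsto B(z)/z$, and Potter's bounds give $\mathbf{P}(Y_1\in[x-r,x+h-r)\cap(\varepsilon x,\infty))\le C_\varepsilon\,h\beta B(x)/x$ uniformly; hence this contribution is $\le C_\varepsilon\,n(h\beta B(x)/x)\,\mathbf{P}(|R|>\rho)\le C_\varepsilon\,n(h\beta B(x)/x)/s$, the width $h$ still coming from the local behaviour of $Y_1$. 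For the first term, on $\{\max_iY_i\le\varepsilon x\}$ we have $\tilde S_n=\sum_iY_i\mathbf{1}\{Y_i\le\varepsilon x\}$, so it is at most $\mathbf{P}(\sum_iY_i\mathbf{1}\{Y_i\le\varepsilon x\}\ge x)$, a tail of a sum of increments that are bounded above by $\varepsilon x$, have variance $\le\sigma^2$ and nonpositive mean; a Fuk--Nagaev-type inequality (equivalently A.V.\ Nagaev's sharp large-deviation bound for regularly varying laws) bounds it by a Gaussian term $\exp(-cx^2/(n\sigma^2))$ plus a ``multiple-jump'' term of order $(cn\varepsilon B(\varepsilon x))^{1/\varepsilon}$, and in the zone $x\ge N\sqrt{n\log(n+1)}$ with $N$ large both are $o(nB(x)/x)$ — the first by the remark above, the second because $\varepsilon<(\beta-2)/(\beta-1)$ pushes the exponent of $n$ in $(cn\varepsilon B(\varepsilon x))^{1/\varepsilon}$ strictly below the exponent $(1-\beta)/2$ of $nB(x)/x$. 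Adding the three pieces and letting $N\to\infty$, then $s\to\infty$, gives the matching upper bound.

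\textbf{Main obstacle.} The delicate point will be the \emph{uniformity}: the estimates have to hold uniformly over all $n\ge1$ and all $x\ge N\sqrt{n\log(n+1)}$ as $N\to\infty$, with the diffusive scale $\sqrt n$ and the big-jump scale $x$ moving together; this dictates the choices $\rho=\sigma\sqrt{sn}$ and truncation level $\varepsilon x$, and it forces one to invoke Potter's inequalities to absorb the slowly varying factor $l$ uniformly. The genuinely hard estimate is the all-small-jumps term: a crude Bennett/Chernoff bound there decays only polylogarithmically in $n$ and so loses to the polynomially small $nB(x)/x$, so the sharp Fuk--Nagaev/Nagaev bound is required, and it is precisely in the window $x\gtrsim\sqrt{n\log n}$ that both its Gaussian remainder and its multiple-jump term are $o(nB(x)/x)$ — which is why the admissible range in the statement takes this form. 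By contrast, the value of the constant $h\beta$ requires no work, since the lower bound and the big-jump-window part of the upper bound each produce $n$ identical contributions equal to $(h\beta B(x)/x)(1+o(1))$.
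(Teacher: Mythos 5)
This proposition is not proved in the paper: it is cited verbatim from Borovkov and Borovkov (2008), Theorem 4.7.1, page~218, so there is no ``paper's own proof'' to compare against. Your proposal is therefore a self-contained derivation of a result the authors simply import, and it follows the standard single-big-jump strategy that underlies the integro-local large-deviation theorems for regularly varying summands (of the same flavour as the one in the cited monograph). The decomposition into ``one big jump on a window of width $h$'' plus ``all summands moderate'' is the right one, and the arithmetic of the range $x\ge N\sqrt{n\log(n+1)}$ is used in exactly the two places where it is needed: to make the diffusive fluctuation $\rho=\sigma\sqrt{sn}$ satisfy $\rho/x\to 0$ uniformly so that the local estimate $\mathbf{P}(Y\in[z,z+h))=(h\beta B(z)/z)(1+o(1))$ obtained from \eqref{remainder} transfers from $z=x-r$ to $z=x$, and to beat the all-small-jumps term via Fuk--Nagaev.

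A few points deserve tightening. (1) The order of limits is slightly glossed over: for the final uniform-in-$(n,x)$ rate $\delta(N)$ one cannot literally ``let $N\to\infty$ and then $s\to\infty$''; one should take $s=s(N)\to\infty$ slowly (e.g.\ $s(N)=\sqrt N$), which still gives $\rho/x\le\sigma\sqrt{s(N)}/(N\sqrt{\log 2})\to 0$ uniformly and makes the $1/s$ loss vanish as part of $\delta(N)$. (2) The assertion that a crude Bernstein/Chernoff bound for the all-small-jumps term ``decays polylogarithmically'' is not quite right: with truncation level $\varepsilon x$ and $x\asymp\sqrt{n\log n}$ the Bernstein exponent $x^2/(n\sigma^2+\varepsilon x^2/3)$ is bounded, so that bound does not decay at all; the conclusion (that the sharper Fuk--Nagaev bound is indispensable) is nonetheless correct, and your exponent bookkeeping showing that $\varepsilon<(\beta-2)/(\beta-1)$ suffices checks out. (3) The passing remark ``$\max(x-r,\varepsilon x)\ge\varepsilon x/2$'' is superfluous (it is $\ge\varepsilon x$ trivially) and can be dropped. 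None of these affects the substance: the argument is sound, and it delivers both the constant $h\beta$ and the uniformity claimed in the proposition.
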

The uniformity of $o(1)$ above is understood as that there exists a function $\delta(N)\downarrow 0$ as $N\to\infty$
such that the term $o(1)$  could be replaced by a function $\delta_h(x,n)$ with $|\delta_h(x,n)|\leq \delta(N)$.

Based on Propositions \ref{Ttail} and \ref{Tlocal} we prove the following

\begin{lemma}
\label{LExponent} Assume conditions \eqref{Expect}, \eqref{Ctail1} and %
\eqref{remainder}. Then, as $n\rightarrow \infty $,
\begin{equation*}
\mathbf{E}\left[ e^{S_{n}};\tau _{n}=n\right] =\mathbf{E}\left[
e^{S_{n}};M_{n}<0\right] \sim \frac{K_{1}}{n}\mathbf{P}\left( X>an\right) ,
\end{equation*}%
where%
\begin{equation}
K_{1}:=\frac{\beta }{a}\exp \left\{ \sum_{n=1}^{\infty }\frac{1}{n}\mathbf{E}%
\left[ e^{S_{n}};S_{n}<0\right] \right\}<\infty .  \label{DefK1}
\end{equation}
\end{lemma}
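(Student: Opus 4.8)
The plan is to use a change-of-measure/duality argument to reduce $\mathbf{E}[e^{S_n};M_n<0]$ to a probability for the random walk $\tilde S$ with increments $Y_i = X_i + a$, and then apply the local limit theorem in Proposition \ref{Tlocal}. First, the equality $\mathbf{E}[e^{S_n};\tau_n=n] = \mathbf{E}[e^{S_n};M_n<0]$ follows by time-reversal: the increments $(X_1,\dots,X_n)$ have the same joint law as $(X_n,\dots,X_1)$, and under this reversal the event $\{\tau_n=n\}$ (the walk attains its minimum at time $n$, i.e. $S_k > S_n$ for $k<n$) maps to $\{S_k < 0 \text{ for all } 1\le k\le n\} = \{M_n<0\}$, while $S_n$ is invariant. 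So it suffices to analyze $\mathbf{E}[e^{S_n};M_n<0]$.

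Next I would decompose according to the value of $S_n$. Since $S_n = \tilde S_n - na$, we have $e^{S_n} = e^{-na}e^{\tilde S_n}$, and on $\{M_n<0\}$ we need $S_n<0$, i.e. $\tilde S_n < na$; but the factor $e^{\tilde S_n}$ means the dominant contribution comes from $\tilde S_n$ close to $na$ — this is precisely the large-deviation regime $x \asymp na \gg \sqrt{n\log n}$ covered by Proposition \ref{Tlocal}. Writing $\mathbf{E}[e^{S_n};M_n<0] = e^{-na}\int_{-\infty}^{na} e^{y}\,\mathbf{P}(\tilde S_n \in dy,\ M_n<0)$ and discretizing the $y$-integral into unit blocks $[k,k+1)$, I would argue that on each block the constraint $M_n<0$ (equivalently: the walk stays below $-k$-ish relative to its endpoint... more carefully, one conditions the bridge to stay negative) contributes an asymptotically constant factor. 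The cleanest route: use the standard decomposition for a walk ending near a high level $y$ via a single big jump $X_j$. On $\{M_n<0\}$ the walk is negative throughout, yet $S_n \approx -ca$ for bounded $c$ requires $S_n$ to be only moderately negative. Condition on which increment, say $X_j = x$, is the large one; given $X_j$ large but the partial sums before and after staying in the right range, the pre-jump segment $(X_1,\dots,X_{j-1})$ and post-jump segment must keep the walk negative. After the big jump of size $\approx na$, for the walk to return to a bounded-negative value by time $n$ and stay negative throughout, the jump must occur near the end; reversing again, equivalently near the start. Summing $\mathbf{E}[e^{S_{j-1}};S_{j-1}<0]$-type contributions over the location and using $\mathbf{P}(X>an)/n$ scaling from Proposition \ref{Tlocal}, the series $\sum_{n\ge 1}\frac1n\mathbf{E}[e^{S_n};S_n<0]$ emerges from exponentiating the factorization (a Spitzer/Baxter-type identity), and the prefactor $\beta/a$ comes from the $h\beta nB(x)/x$ shape with $x=na$, $h=a$ per unit block, giving $\beta/(na)\cdot n = \beta/a$.

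Concretely the steps are: (1) time-reversal to get the stated equality; (2) pass to $\tilde S_n$ and localize to $\tilde S_n \in [na-O(1),na]$, controlling the tail $\tilde S_n < na - R$ by a crude bound using $\mathbf{E}[e^{\tilde S_n}] $-type estimates to show it is $o(\mathbf{P}(X>an))$ after the $e^{-na}$ factor — here one uses that $Y$ has mean zero and finite variance so that $\tilde S_n$ is concentrated near $0$ and the exponential weight is not compensated; (3) on the localized region apply Proposition \ref{Tlocal} with the "stay negative" constraint handled by a renewal/ladder-epoch decomposition, identifying the limiting constant as $\exp\{\sum \frac1n \mathbf{E}[e^{S_n};S_n<0]\}$; (4) collect the $\beta/a$ factor. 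The finiteness $K_1<\infty$ follows since $\mathbf{E}[e^{S_n};S_n<0] \le \mathbf{E}[e^{S_n};M_n<0] = O(\mathbf{P}(X>an)/n)$ by the lemma's own conclusion (or by a direct bound $\le \mathbf{E}[e^{S_n} \wedge 1]$ combined with the local estimate), so the defining series converges; one should state this bound independently first to avoid circularity.

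The main obstacle will be step (3): rigorously justifying that conditioning the random walk to stay negative on $[0,n]$, in the regime where $S_n$ returns to a bounded-negative level while one increment is of size $\approx na$, produces exactly the multiplicative constant $\exp\{\sum_{n}\frac1n\mathbf{E}[e^{S_n};S_n<0]\}$ — this requires a careful ladder-structure argument (Wiener–Hopf type factorization of the constrained walk, or an explicit coupling of the pre-jump and post-jump pieces), and matching it against the uniform-in-$x$ error in Proposition \ref{Tlocal}. The interplay between the big-jump localization and the negativity constraint, and showing the "big jump near the boundary" heuristic is asymptotically exact with the claimed constant, is where the real work lies; everything else is bookkeeping with slowly varying functions and the condition \eqref{remainder}.
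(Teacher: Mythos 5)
Your time-reversal step is correct and matches the paper exactly. Where you diverge is in how to extract the asymptotics of $\mathbf{E}[e^{S_n};M_n<0]$, and here I think you have the pieces but have assembled them in a way that makes the problem much harder than it needs to be.

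The paper does \emph{not} attack the constrained quantity $\mathbf{E}[e^{S_n};M_n<0]$ directly. Instead it first computes the \emph{unconstrained} quantity: writing $S_n=\tilde S_n-na$, cutting off the tail $S_n<-(\beta+2)\log n$, discretizing into blocks of width $h$, and applying Proposition~\ref{Tlocal} with $x=-(k+1)h+an$ (which lies in the large-deviation regime), one gets
\begin{equation*}
\mathbf{E}\!\left[e^{S_n};S_n<0\right]\sim\frac{\beta}{a}\,\mathbf{P}(X>an).
\end{equation*}
Then the Baxter identity
\begin{equation*}
\exp\Big\{\sum_{n\geq1}\frac{t^n}{n}\mathbf{E}\big[e^{S_n};S_n<0\big]\Big\}
=1+\sum_{n\geq1}t^n\,\mathbf{E}\big[e^{S_n};M_n<0\big]
\end{equation*}
relates the two power series, and Theorem 1 of Chover--Ney--Wainger \cite{CNW} is the transfer theorem that converts the known asymptotics of the coefficients $\frac1n\mathbf{E}[e^{S_n};S_n<0]\sim\frac{\beta}{an}\mathbf{P}(X>an)$ into the asymptotics of the coefficients on the right-hand side, producing $\mathbf{E}[e^{S_n};M_n<0]\sim e^{\sum}\cdot\frac{\beta}{an}\mathbf{P}(X>an)=\frac{K_1}{n}\mathbf{P}(X>an)$. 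Finiteness of $K_1$ is then immediate from the unconstrained asymptotics and $\beta>2$, with no circularity issue.

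In contrast, you propose to localize $\tilde S_n$ near $na$ \emph{inside} the constraint $\{M_n<0\}$ and argue that the negativity constraint contributes a multiplicative constant via a ladder-epoch/Wiener--Hopf/coupling argument; you correctly identify this as ``where the real work lies.'' That is precisely the work the paper avoids: you never need to condition the walk to stay negative, couple pre- and post-jump segments, or match a ladder factorization against the error in Proposition~\ref{Tlocal}. You also mention the Baxter identity as the source of the constant but frame it as an \emph{output} of your direct analysis rather than as the \emph{input} that, together with the CNW transfer theorem, makes the direct analysis unnecessary. Your plan is not wrong in spirit — the $\beta/a$ prefactor, the local limit theorem, and the Baxter identity are exactly the right ingredients — but the logical flow should be reversed: first the unconstrained asymptotics, then Baxter, then CNW. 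The way you have it, step (3) would require essentially reconstructing a nontrivial chunk of subexponential renewal theory by hand, and it is not clear you could close it without rediscovering the CNW theorem.
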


\begin{proof}
The first equality follows from duality. More specifically, the random walks
$\{S_{k}:k=0,1,\ldots ,n\}$ and $\{S_{k}^{\prime
}:=S_{n}-S_{n-k}:k=0,1,\ldots ,n\}$ have the same law, and the event $\{\tau
_{n}=n\}$ for $\{S_{k}\}$ corresponds to the event $\{M_{n}^{\prime }<0\}$
for $\{S_{k}^{\prime }\}.$

Next we evaluate the quantity
\begin{equation}
\mathbf{E}\left[ e^{S_{n}};S_{n}<0\right] =\mathbf{E}\left[
e^{S_{n}};-\left( \beta +2\right) \log n\leq S_{n}<0\right] +O\left(
n^{-\beta -2}\right) .  \label{Fterm}
\end{equation}%
Clearly, for any $h>0$,
\begin{eqnarray*}
&&\sum_{0\leq k\leq \left( \beta +2\right) h^{-1}\log n}
e^{-\left(k+1\right) h}\cdot\mathbf{P}\left( -(k+1)h+an\leq \tilde{S}%
_{n}\leq -kh+an\right) \\
&\leq &\mathbf{E}\left[ e^{S_{n}};-\left( \beta +2\right) \log n\leq S_{n}<0%
\right] \\
&\leq &\sum_{0\leq k\leq \left( \beta +2\right) h^{-1}\log n}e^{-kh} \cdot%
\mathbf{P}\left( -(k+1)h+an\leq \tilde{S}_{n}\leq -kh+an\right) .
\end{eqnarray*}%
By Proposition \ref{Tlocal}, in the range of $k$ under consideration, as $%
n\rightarrow\infty$,
\begin{eqnarray*}
\mathbf{P}\left( -(k+1)h+an\leq \tilde{S}_{n}\leq -kh+an\right)
&=&\frac{h\beta n}{\left( -(k+1)h+an\right) }B\left( -(k+1)h+an\right) (1+o(1)) \\
&=&\frac{h\beta }{a}A\left( an\right) (1+o(1)),
\end{eqnarray*}%
where $o(1)$ is uniform in $0\leq k\leq \left( \beta +2\right) h^{-1}\log n$%
. Now passing to the limit as $n\rightarrow \infty $ we get
\begin{eqnarray*}
h\sum_{k=0}^{\infty }e^{-(k+1)h} &\leq &\lim \inf_{n\rightarrow \infty }%
\frac{a\mathbf{E}\left[ e^{S_{n}};-\left( \beta +2\right) \log n\leq S_{n}<0%
\right] }{\beta A\left( an\right) } \\
&\leq &\lim \sup_{n\rightarrow \infty }\frac{a\mathbf{E}\left[
e^{S_{n}};-\left( \beta +2\right) \log n\leq S_{n}<0\right] }{\beta A\left(
an\right) } \\
&\leq &h\sum_{k=0}^{\infty }e^{-kh}.
\end{eqnarray*}%
Letting now $h\rightarrow 0+$ we see that%
\begin{equation*}
\lim_{n\rightarrow \infty }\frac{a\mathbf{E}\left[ e^{S_{n}};-\left( \beta
+2\right) \log n\leq S_{n}<0\right] }{\beta A\left( an\right) }=1.
\end{equation*}%
Combining this with (\ref{Fterm}) we conclude that, as $n\rightarrow \infty $%
,
\begin{equation}
\mathbf{E}\left[ e^{S_{n}};S_{n}<0\right] =\frac{\beta }{a}A\left( an\right)
(1+o(1))\sim \frac{\beta }{a}\mathbf{P}(X>an).  \label{Nee1}
\end{equation}

Furthermore, we know by a Baxter identity that
\begin{equation*}
\exp\left\{ \sum_{n=1}^{\infty }\frac{t^{n}}{n}\mathbf{E}\left[ e^{S_{n}};S_{n}<0%
\right] \right\}
=1+\sum_{n=1}^{\infty }t^{n}\mathbf{E}\left[ e^{S_{n}};M_{n}<0\right],
\end{equation*}%
see for example Chapter XVIII.3 in \cite{FellerII} or Chapter 8.9 in \cite%
{BGT}. From (\ref{Nee1}) and Theorem 1 in \cite{CNW} we get%
\begin{equation*}
\mathbf{E}\left[ e^{S_{n}};M_{n}<0\right] \sim \frac{K_{1}}{n}\mathbf{P}%
(X>an),
\end{equation*}%
where $K_{1}$ is given by (\ref{DefK1}). That $K_1<\infty$ follows from %
\eqref{Nee1}.
\end{proof}

\begin{corollary}
\label{C_minfinite} Under the conditions of Lemma $\ref{LExponent}$, the
constant $K$ in \eqref{DefK} is finite.
\end{corollary}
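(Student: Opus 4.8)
The plan is to reduce the convergence of the series defining $K$ to two tail estimates for the associated random walk. First observe that $\gamma$ is $[0,1]$-valued almost surely: for each $x$, conditionally on $\{X>x\}$ the quantity $f(1-e^{-x})$ is the value of a probability generating function at a point of $[0,1]$ and hence lies in $[0,1]$, so the weak limit $\gamma$ in \eqref{G1} is supported on $[0,1]$; this, and nothing else, is what we use about $\gamma$. For any $s\in[0,1]$ and any fixed environment, $1-s^{Z_j}=(1-s)(1+s+\dots+s^{Z_j-1})\le(1-s)Z_j$, so taking $\mathbf{E}_\pi$ and recalling $\mathbf{E}_\pi[Z_j]=e^{S_j}$ gives $1-f_{0,j}(s)=\mathbf{E}_\pi[1-s^{Z_j}]\le(1-s)e^{S_j}\le e^{S_j}$, while trivially $1-f_{0,j}(s)\le1$. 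Substituting $s=\gamma$ yields the pointwise bound $1-f_{0,j}(\gamma)\le\min(1,e^{S_j})$, and since $f_{0,0}(\gamma)=\gamma$ with $\mathbf{E}[1-\gamma]\le1$,
\[
K=\sum_{j\ge0}\mathbf{E}\bigl[1-f_{0,j}(\gamma)\bigr]\ \le\ 1+\sum_{j\ge1}\mathbf{E}\bigl[\min(1,e^{S_j})\bigr]\ =\ 1+\sum_{j\ge1}\Bigl(\mathbf{P}(S_j\ge0)+\mathbf{E}\bigl[e^{S_j};S_j<0\bigr]\Bigr).
\]
Thus it suffices to show that both series on the right converge.

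The second series is controlled by the identity \eqref{Nee1} obtained inside the proof of Lemma~\ref{LExponent}, $\mathbf{E}[e^{S_j};S_j<0]\sim(\beta/a)\,\mathbf{P}(X>aj)$, together with $\sum_j\mathbf{P}(X>aj)<\infty$, which follows from \eqref{Ctail1} since $\beta>2$. For the first series, write $\mathbf{P}(S_j\ge0)=\mathbf{P}(\tilde S_j\ge aj)$ with $\tilde S_j=Y_1+\dots+Y_j$, $Y_i\eqd Y=X+a$. The quickest way to conclude is the Hsu--Robbins--Erd\H{o}s complete convergence theorem, applicable because $\mathbf{E}[X^2]=\sigma^2+a^2<\infty$: it gives $\sum_j\mathbf{P}(|\tilde S_j|>\varepsilon j)<\infty$ for every $\varepsilon>0$, so taking $\varepsilon\in(0,a)$ yields $\sum_j\mathbf{P}(\tilde S_j\ge aj)<\infty$. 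Alternatively, and more in the spirit of this paper, one can estimate $\mathbf{P}(\tilde S_j\ge aj)$ as a sum over $m\ge0$ of the local probabilities $\mathbf{P}(\tilde S_j\in[aj+mh,aj+(m+1)h))$; applying Proposition~\ref{Tlocal} to each window, summing, and using that $u\mapsto B(u)/u$ is nonincreasing together with $\int_x^\infty B(u)u^{-1}\,du\sim B(x)/\beta$, gives the sharper bound $\mathbf{P}(S_j\ge0)=O\!\bigl(j\,\mathbf{P}(X>aj)\bigr)$, again summable for $\beta>2$. Either way, $K<\infty$.

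The only step that is not immediate is the summability of $\sum_j\mathbf{P}(S_j\ge0)$: it is a sum of probabilities of large deviations of order $j$, and it is exactly the finiteness of $\sigma^2$ (equivalently $\mathbf{E}[X^2]<\infty$) that makes it converge; if one goes through Proposition~\ref{Tlocal} instead, the point needing care is that the uniformity there---valid for $x\ge N\sqrt{n\log(n+1)}$---must cover all the windows $[aj+mh,aj+(m+1)h)$, $m\ge0$, simultaneously, which is so once $j$ is large relative to the fixed $N$ because then $aj$ far exceeds $\sqrt{j\log j}$, after which only a routine slowly-varying asymptotic remains. Everything else is elementary; in particular positivity $K>0$ needs no work, since $K\ge\mathbf{E}[1-f_{0,0}(\gamma)]=\mathbf{E}[1-\gamma]>0$ because $\gamma\le1$ almost surely while $\mathbf{P}(\gamma<1)>0$ by Assumption~\ref{asmptn:eta}(ii).
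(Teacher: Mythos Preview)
Your proof is correct, but it proceeds along a different route than the paper. The paper bounds $1-f_{0,j}(\gamma)$ by the running minimum $e^{S_{\tau_j}}$ (via \eqref{TT11}) and then decomposes $\sum_j\mathbf{E}[e^{S_{\tau_j}}]$ according to the location of the minimum, arriving at the factorisation
\[
\sum_{j\ge0}\mathbf{E}\bigl[e^{S_{\tau_j}}\bigr]=\Bigl(\sum_{i\ge0}\mathbf{E}\bigl[e^{S_i};\tau_i=i\bigr]\Bigr)\Bigl(\sum_{k\ge0}\mathbf{P}(L_k\ge0)\Bigr),
\]
with finiteness of the two factors furnished by the \emph{conclusion} of Lemma~\ref{LExponent} and by Proposition~\ref{Ttail}. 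You instead use the cruder endpoint bound $1-f_{0,j}(\gamma)\le\min(1,e^{S_j})$, split according to the sign of $S_j$, and handle the two resulting series with \eqref{Nee1} and Hsu--Robbins--Erd\H{o}s. Your approach is more elementary in that it avoids fluctuation theory (no decomposition at the minimum, no Baxter identity, no Proposition~\ref{Ttail}); the price is that you reach inside the proof of Lemma~\ref{LExponent} for the intermediate estimate \eqref{Nee1} rather than citing the lemma's final statement. Both arguments exploit $\beta>2$ in the same way, through summability of $j\mapsto\mathbf{P}(X>aj)$, and your remark on $K>0$ is a welcome addition that the paper leaves implicit.
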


\begin{proof}
Clearly,%
\begin{equation}
1-f_{0,j}(\gamma) \leq 1-f_{0,j}(0) =\mathbf{P}_{\pi}(Z_{j}>0)= \min_{0\leq
i\leq j}\mathbf{P}_{\pi }(Z_{i}>0) \leq \min_{0\leq i\leq j}e^{S_{i}} =
e^{S_{\tau _{j}}}.  \label{TT11}
\end{equation}
Thus
\begin{eqnarray*}
K &=&\sum_{j=0}^{\infty }\mathbf{E}\left[ 1-f_{0,j}\left( \gamma \right) %
\right] \leq \sum_{j=0}^{\infty }\mathbf{E}\left[ e^{S_{\tau _{j}}}\right] \\
&=&\sum_{j=0}^\infty \sum_{i=0}^j \mathbf{E}\left[ e^{S_{i}}; \tau _{j} = i%
\right] =\sum_{i=0}^\infty \mathbf{E}\left[ e^{S_{i}}; \tau_{i} = i\right]%
\cdot \sum_{j=i}^\infty \mathbf{P} (L_{j-i}\ge 0).
\end{eqnarray*}
The last term is finite by Lemma \ref{LExponent} and Proposition \ref{Ttail}.
\end{proof}

Next, recall that $U_{n}=\inf \left\{ j:X_{j}>na\right\} $, and $\tau =
\inf\{j>0: S_j <0\}.$ The next result says that if the associated random
walk remains nonnegative for a long time, then there must be a big jump at
the beginning.

\begin{proposition}
\label{Tdurr} $[$\citet[Theorem 3.2, page 283]{Dur1}$]$ If conditions %
\eqref{Expect} and \eqref{Ctail1} hold then%
\begin{equation*}
\lim_{n\rightarrow \infty }\mathbf{P}\left( U_{n}=j|\tau >n\right) =\frac{1}{%
\mathbf{E}\tau }\mathbf{P}\left( \tau >j-1\right) .
\end{equation*}
\end{proposition}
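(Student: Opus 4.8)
Although this statement is quoted from \citet{Dur1}, let me sketch the argument I would use; notably it needs only \eqref{Expect}, \eqref{Ctail1}, the hypothesis $\var(X)<\infty$, and Proposition \ref{Ttail}, but none of the finer local-limit machinery. The plan is to fix $j\geq1$ and decompose the event $\{\tau>n,\ U_n=j\}$ around the big jump at time $j$: on this event $X_i\leq na$ for $i<j$, the partial sums $S_1,\dots,S_{j-1}$ are all nonnegative (equivalently $\tau>j-1$), $X_j>na$, and $S_{j+k}\geq0$ for $1\leq k\leq n-j$. Writing $\hat S_k:=X_{j+1}+\cdots+X_{j+k}$ for the post-jump walk, which is a copy of $\{S_k\}$ independent of $(X_1,\dots,X_j)$, the last requirement reads $\min_{1\leq k\leq n-j}\hat S_k\geq-(S_{j-1}+X_j)$. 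Conditioning on $\sigma(X_1,\dots,X_{j-1})$ and then using independence of $X_j$ from $\hat S$ gives
\[
\mathbf{P}(\tau>n,\ U_n=j)=\mathbf{E}\big[\indic\{X_i\leq na,\ i<j\}\,\indic\{\tau>j-1\}\,g_n(S_{j-1})\big],
\]
where
\[
g_n(y):=\int_{na}^{\infty}\mathbf{P}\Big(\min_{1\leq k\leq n-j}\hat S_k\geq-(y+x)\Big)\,\mathbf{P}(X\in dx)\ \leq\ \mathbf{P}(X>na).
\]

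The crux is the ``single big jump'' estimate that $g_n(y)\sim\mathbf{P}(X>na)$ as $n\to\infty$, uniformly in $y\geq0$. The upper bound is the trivial one displayed above. For a matching lower bound, fix $\varepsilon>0$ and keep only $x>na(1+\varepsilon)$ in the integral, where $-(y+x)<-na(1+\varepsilon)$; this yields $g_n(y)\geq\mathbf{P}\big(X>na(1+\varepsilon)\big)\,\mathbf{P}\big(\min_{1\leq k\leq n-j}\hat S_k\geq-na(1+\varepsilon)\big)$. By \eqref{Ctail1} the first factor is $\sim(1+\varepsilon)^{-\beta}\mathbf{P}(X>na)$, while the second tends to $1$: writing $\hat S_k=W_k-ka$ with $W_k$ a mean-zero random walk of variance $k\sigma^2$, one has $\min_{k\leq n-j}\hat S_k\geq-\max_{k\leq n-j}|W_k|-(n-j)a$, so the event in question contains $\{\max_{k\leq n-j}|W_k|\leq na\varepsilon+ja\}$, whose probability goes to $1$ by Kolmogorov's maximal inequality. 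Letting $\varepsilon\downarrow0$ gives $\liminf_n g_n(y)/\mathbf{P}(X>na)\geq1$, with no dependence on $y$.

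Granting this, the remainder is bookkeeping. Dividing the displayed identity by $\mathbf{P}(X>na)$, the integrand is bounded by $1$ and converges almost surely to $\indic\{\tau>j-1\}$ (because $\indic\{X_i\leq na,\ i<j\}\to1$ for fixed $j$, and $g_n(S_{j-1})/\mathbf{P}(X>na)\to1$ on $\{\tau>j-1\}$, where $S_{j-1}\geq0$, by the uniform estimate); bounded convergence then gives $\mathbf{P}(\tau>n,\ U_n=j)\sim\mathbf{P}(\tau>j-1)\,\mathbf{P}(X>na)$. Summing over $j\geq1$ (legitimate since $U_n<\infty$ a.s.) and using $g_n\leq\mathbf{P}(X>na)$ once more as a domination by the $n$-free, summable sequence $\mathbf{P}(\tau>j-1)$ — whose sum is $\mathbf{E}\tau<\infty$ by Proposition \ref{Ttail}, as $\beta>2$ — dominated convergence for the series yields $\mathbf{P}(\tau>n)=\sum_{j\geq1}\mathbf{P}(\tau>n,U_n=j)\sim\mathbf{E}\tau\,\mathbf{P}(X>na)$ (consistent with Proposition \ref{Ttail}, forcing $e^{D}=\mathbf{E}\tau$). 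Dividing the two asymptotics gives $\mathbf{P}(U_n=j\mid\tau>n)\to\mathbf{P}(\tau>j-1)/\mathbf{E}\tau$. The one point that genuinely needs care is the uniformity in $y$ in the big-jump estimate, since that is exactly what makes both passages to the limit — inside the expectation (over $n$) and inside the sum (over $j$) — legitimate.
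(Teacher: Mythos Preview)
The paper does not supply a proof of this proposition; it is simply quoted from Durrett (1980). Your sketch is a clean rendition of the standard one-big-jump argument, and the core computation for fixed $j$ --- the decomposition around the jump, the two-sided estimate $g_n(y)\sim\mathbf{P}(X>na)$ uniformly in $y\geq0$, and bounded convergence --- is correct.

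One point to tighten. Your displayed identity
\[
\mathbf{P}(\tau>n,\ U_n=j)=\mathbf{E}\big[\indic\{X_i\leq na,\ i<j\}\,\indic\{\tau>j-1\}\,g_n(S_{j-1})\big]
\]
and hence the bound $\mathbf{P}(\tau>n,U_n=j)\leq\mathbf{P}(\tau>j-1)\,\mathbf{P}(X>na)$ are only valid for $j\leq n$; when $j>n$ the event $\{\tau>n\}$ does not imply $\{\tau>j-1\}$, so the ``$n$-free summable majorant'' $\mathbf{P}(\tau>j-1)$ is not available over the full range, and your dominated-convergence passage in the sum over $j$ is not justified as written. Fortunately that summing step is superfluous: Proposition~\ref{Ttail} already gives the denominator $\mathbf{P}(\tau>n)\sim e^{D}\mathbf{P}(X>na)$, and Spitzer's identity $\mathbf{E}\tau=\exp\big\{\sum_{k\geq1}k^{-1}\mathbf{P}(S_k\geq0)\big\}=e^{D}$ (valid here since $\mathbf{E}\tau<\infty$) identifies the constant. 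For the numerator $j$ is fixed, so eventually $j\leq n$ and your analysis yields $\mathbf{P}(\tau>n,\,U_n=j)\sim\mathbf{P}(\tau>j-1)\,\mathbf{P}(X>na)$; dividing the two asymptotics gives the result without summing.
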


\section{Proof of Theorem\textbf{\ \protect\ref{Textinction}}}

We first introduce the following convergence statements, which will be shown
to be equivalent to each other: (under probability $\fP$,) as $x\rightarrow~\infty,$
\begin{compactenum}[(i)]
\item \label{eq:gamma_x} for any function $\delta (x)$ satisfying
$\lim_{x\rightarrow\infty}\delta(x)=0$,
$\mathcal{L}\left(f\left(1-\exp(-x(1+\delta (x)))\right)
|X>x\right)$ $ \Longrightarrow$ $\mathcal{L}(\gamma)$;

\item \label{G22} for any function $\delta (x)$ satisfying $%
\lim_{x\rightarrow\infty}\delta(x)=0$,
$\mathcal{L}\left(f\left(\exp(-\exp(-x(1+\delta (x))))\right)
|X>x\right)$ $ \Longrightarrow$ $\mathcal{L}(\gamma)$;

\item \label{G3} for any $\lambda >0$, $\mathcal{L}\left( f\left(
\exp \left( -\lambda \exp(-x)\right) \right)
|X>x\right)$ $\Longrightarrow$ $\mathcal{L}(\gamma)$;
\quad and

\item \label{G2} conditional on $\{X>x\}$, $\mathcal{L}_{\pi }\left( Z_{1}e^{-x}\right)
\Longrightarrow \gamma \delta _{0}+\left( 1-\gamma \right) \delta _{\infty}$.
\end{compactenum}

\begin{lemma}
\label{lemma:char_gamma} Assume condition \eqref{Ctail1}. Then the
convergences \eqref{eq:gamma_x} $\sim$\eqref{G2} above are equivalent, and
are all equivalent to~\eqref{G1}.
\end{lemma}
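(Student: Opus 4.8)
\textbf{Proof plan for Lemma \ref{lemma:char_gamma}.}

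The plan is to establish a cycle of implications among the four statements and then close the loop back to \eqref{G1}. The natural route is
\[
\eqref{G1}\Rightarrow\eqref{G3}\Rightarrow\eqref{G22}\Rightarrow\eqref{eq:gamma_x}\Rightarrow\eqref{G1},
\]
with \eqref{G2} inserted as equivalent to \eqref{eq:gamma_x} essentially by unwinding definitions. First I would record the elementary bridge between generating functions and the law of $Z_1e^{-x}$: since $\fE_\pi[s^{Z_1}]=f(s)$, for $s=\exp(-\lambda e^{-x})$ we have $f(s)=\fE_\pi[\exp(-\lambda Z_1 e^{-x})]$, i.e. $f(\exp(-\lambda e^{-x}))$ is the Laplace transform of $\mathcal L_\pi(Z_1e^{-x})$ evaluated at $\lambda$. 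Hence, for each fixed $\pi$, convergence of $\mathcal L_\pi(Z_1e^{-x})$ (as a random measure on $[0,\infty]$) to $\gamma\delta_0+(1-\gamma)\delta_\infty$ is equivalent to $f(\exp(-\lambda e^{-x}))\to\gamma$ for every $\lambda>0$ (the Laplace transform of $\gamma\delta_0+(1-\gamma)\delta_\infty$ at $\lambda>0$ is $\gamma\cdot 1+(1-\gamma)\cdot 0=\gamma$). Passing from this ``for every fixed $\pi$'' picture to the conditional-on-$\{X>x\}$ statements is where condition \eqref{Ctail1} enters, through the observation that $\{X>x\}$ has probability $A(x)$ which is regularly varying, so conditioning does not create pathologies; this gives \eqref{G3}$\Leftrightarrow$\eqref{G2}.

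For the substantive implications I would exploit that under $\{X>x\}$ we have $f'(1)=e^{X}>e^x$, so $Z_1$ is ``typically of order at least $e^x$'', and the relevant transforms $f(\exp(-\lambda e^{-x}))$, $f(1-e^{-x(1+\delta(x))})$, $f(\exp(-\exp(-x(1+\delta(x)))))$ all differ from one another only through the argument being a quantity of the form $1-c e^{-x}(1+o(1))$ with $c$ ranging over a bounded-away-from-$0$-and-$\infty$ set. Concretely: $\exp(-\lambda e^{-x})=1-\lambda e^{-x}+O(e^{-2x})$, and $1-e^{-x(1+\delta(x))}=1-e^{-x}e^{-x\delta(x)}$, and since on $\{X>x\}$ the monotonicity $s\mapsto f(s)$ is nondecreasing, a sandwiching argument shows that if $f(1-\lambda e^{-x})\to\gamma$ conditionally on $\{X>x\}$ for all $\lambda$ in a dense set then the same holds with $\lambda e^{-x}$ replaced by any $c(x)e^{-x}$ with $c(x)$ eventually between two such constants, and in particular with $c(x)=e^{-x\delta(x)}\to 1$ or $c(x)=e^{-x\delta(x)}(1+o(1))$. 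This monotonicity-sandwich is the engine that makes \eqref{G3}, \eqref{G22}, \eqref{eq:gamma_x}, and \eqref{G1} (which is the $\delta\equiv 0$, $\lambda=1$ case of \eqref{eq:gamma_x} written with $1-e^{-x}$) all interchangeable; I would state it once as a claim and reuse it. One should also note that $\gamma<1$ with positive probability is a property of the limit law, preserved under all these reformulations, so the hypothesis on $\gamma$ transfers automatically.

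The main obstacle I anticipate is making the sandwiching rigorous \emph{at the level of conditional weak convergence of random variables} rather than pointwise in $\pi$: one has a family of $[0,1]$-valued random variables $G_x:=f(1-c_x(\pi)e^{-x})$ (with $c_x$ possibly depending on $\pi$ through the need to express $\exp(-\exp(-x(1+\delta(x))))$ exactly) whose conditional law given $\{X>x\}$ must be shown to converge. Since these variables live in the compact space $[0,1]$, weak convergence is equivalent to convergence of $\fE[\phi(G_x)\mid X>x]$ for all bounded continuous $\phi$, and by a further monotone-class / Stone–Weierstrass reduction it suffices to handle $\phi(t)=t^m$ or $\phi$ Lipschitz; the Lipschitz estimate combined with $|f(1-ue^{-x})-f(1-ve^{-x})|\le |u-v|e^{-x}f'(1)\cdot(\text{something})$ is delicate because $f'(1)=e^X$ can be as large as $e^{na}$-type values, so one cannot afford a crude Lipschitz bound on $f$ near $1$. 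The resolution is to use instead the uniform bound $0\le f(1-ue^{-x})-f(1-ve^{-x})\le v e^{-x}f'(1-\xi)$ is \emph{not} uniformly small, but the ordering $f(1-ue^{-x})\ge f(1-ve^{-x})$ for $u\le v$ together with the fact that $f(1-ue^{-x})$ and $f(1-vue^{-x})$ have the same conditional limit for all constants $u,v>0$ (which is exactly statement \eqref{G3}) lets one squeeze: for any $\epsilon>0$, eventually $c_x(\pi)$ lies in $[(1-\epsilon),(1+\epsilon)]\cdot c$ for the relevant constant $c$, so $f(1-(1+\epsilon)ce^{-x})\le f(1-c_x(\pi)e^{-x})\le f(1-(1-\epsilon)ce^{-x})$ deterministically on $\{X>x\}$ for $x$ large, and both bounds have conditional limit $\gamma$ as $x\to\infty$ then $\epsilon\to 0$. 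Thus the whole lemma reduces to the single clean statement \eqref{G3} plus the Laplace-transform dictionary, and I would organize the write-up around proving \eqref{G1}$\Leftrightarrow$\eqref{G3} carefully and then dispatching the rest by the squeeze.
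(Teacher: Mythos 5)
Your cycle of implications runs in the \emph{opposite} direction from what the monotonicity squeeze can actually deliver, and this produces a genuine gap. You propose $\eqref{G1}\Rightarrow\eqref{G3}\Rightarrow\eqref{G22}\Rightarrow\eqref{eq:gamma_x}\Rightarrow\eqref{G1}$ and want to pass from the fixed-$\lambda$ statement \eqref{G3} to the $\delta(x)\to 0$ statements \eqref{G22} and \eqref{eq:gamma_x} by sandwiching $c_x$ between constants. But you assert ``$c(x)=e^{-x\delta(x)}\to 1$,'' which is false in general: $\delta(x)\to 0$ does not force $x\delta(x)\to 0$. For instance $\delta(x)=1/\sqrt{x}$ gives $e^{-x\delta(x)}=e^{-\sqrt{x}}\to 0$, and $\delta(x)=-1/\sqrt{x}$ gives $e^{-x\delta(x)}\to\infty$. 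In neither case is $c_x$ eventually contained in $[(1-\epsilon)c,(1+\epsilon)c]$ for any fixed $c>0$, so the deterministic inequality $f(1-(1+\epsilon)ce^{-x})\le f(1-c_xe^{-x})\le f(1-(1-\epsilon)ce^{-x})$ is simply unavailable, and the squeeze collapses. Statement \eqref{eq:gamma_x} is strictly stronger than \eqref{G3}; you cannot recover it from \eqref{G3} by a constants argument.

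The ingredient that closes this gap is the regular variation in \eqref{Ctail1}, and in your plan it is placed in the wrong step. It is not needed for $\eqref{G3}\Leftrightarrow\eqref{G2}$ (that really is just the Laplace-transform dictionary for $Z_1e^{-x}$, as you correctly observe). It \emph{is} needed for the implication $\eqref{G1}\Rightarrow\eqref{eq:gamma_x}$, which is the genuinely hard step. Since $A(x)=\mathbf{P}(X>x)$ is regularly varying and $\delta(x)\to 0$, one has $\mathbf{P}(X>x(1+\delta(x)))/\mathbf{P}(X>x)\to 1$, so the conditioning events $\{X>x\}$ and $\{X>x(1+\delta(x))\}$ are asymptotically equivalent; setting $y:=x(1+\delta(x))$, \eqref{G1} gives $\mathcal{L}(f(1-e^{-y})\,|\,X>y)\Longrightarrow\mathcal{L}(\gamma)$, and swapping the equivalent conditioning event (this is where a lemma such as Lemma~17 of \cite{lz08} enters) yields $\mathcal{L}(f(1-e^{-y})\,|\,X>x)\Longrightarrow\mathcal{L}(\gamma)$, i.e.\ \eqref{eq:gamma_x}. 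After this, the chain $\eqref{eq:gamma_x}\Rightarrow\eqref{G22}\Rightarrow\eqref{G3}\Rightarrow\eqref{G1}$ is all downhill: each step is either a specialization (take $\delta(x)=-\log\lambda/x$, which does go to $0$) or a sandwich between two instances of the \emph{already-available} general-$\delta$ statement, which is exactly what your monotonicity argument does correctly handle. Reversing your cycle and inserting the event-swap in the right place repairs the proof; as written, the passage from \eqref{G3} to \eqref{eq:gamma_x} does not go through.
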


\begin{proof}
We will show that \eqref{G1}$\Rightarrow$\eqref{eq:gamma_x}$\Rightarrow$%
\eqref{G22}$\Rightarrow$\eqref{G3}$\Leftrightarrow$ \eqref{G2}, and finally %
\eqref{G3}$\Rightarrow$\eqref{G1}.

We first prove that \eqref{G1} implies \eqref{eq:gamma_x}.
\begin{comment} for the simplest
case when $\delta (x)\equiv 0$. We show that for any $y\geq 0$ such that $%
\lim_{n\rightarrow \infty }\mathbf{P}(f\left( 1-e^{-na}\right) >y\ |\ X>na)$
exists, $\lim_{x\rightarrow \infty }\mathbf{P}(f\left( 1-e^{-x}\right)
>y\ |\ X>x)$ also exists and the two limits coincide. In fact, if $na\leq
x<(n+1)a$, then
\begin{eqnarray*}
\mathbf{P}(f\left( 1-e^{-x}\right) >y,X>x) &\geq &\mathbf{P}(f\left(
1-e^{-na}\right) >y,X>(n+1)a) \\
&\geq &\mathbf{P}(f\left( 1-e^{-na}\right) >y,X>na)-\mathbf{P}((n+1)a\geq
X>na).
\end{eqnarray*}%
Hence
\begin{eqnarray*}
&&\liminf_{x\rightarrow \infty }\mathbf{P}(f\left( 1-e^{-x}\right) >y|X>x) \\
&\geq &\lim_{n\rightarrow \infty }\mathbf{P}(f\left( 1-e^{-na}\right)
>y|X>na)-\lim_{n\rightarrow \infty }\frac{\mathbf{P}((n+1)a\geq X>na)}{%
\mathbf{P}(X>na)} \\
&=&\lim_{n\rightarrow \infty }\mathbf{P}(f\left( 1-e^{-na}\right) >y|X>na).
\end{eqnarray*}%
Similarly,
\begin{equation*}
\limsup_{x\rightarrow \infty }\mathbf{P}(f\left( 1-e^{-x}\right) >y|X>x)\leq
\lim_{n\rightarrow \infty }\mathbf{P}(f\left( 1-e^{-na}\right) >y|X>na).
\end{equation*}%
Therefore,
%the limit of $\mathbf{P}(f\left(1-e^{-x}\right) >y|X>x)$ as $x\to\infty$
$\lim_{x\rightarrow \infty }\mathbf{P}(f\left( 1-e^{-x}\right) >y|X>x)$
exists and coincides with $\lim_{n\rightarrow \infty }\mathbf{P}(f\left(
1-e^{-na}\right) >y|X>na)$.

To show \eqref{eq:gamma_x} in the general case, we note that
\end{comment}
By \eqref{Ctail1}, the events $\{X>x\}$ and $\{X>x(1+\delta (x))\}$ are
asymptotically equivalent to each other (in the sense that $%
\lim_{x\rightarrow \infty }\mathbf{P}(X>x|X>x(1+\delta
(x))=\lim_{x\rightarrow \infty }\mathbf{P}(X>x(1+\delta (x))|X>x)=1$), hence
it follows, for example, from Lemma 17 in \cite{lz08}, that
\begin{eqnarray*}
\lim_{x\rightarrow \infty }\mathbf{P}\left( f(1-e^{-x(1+\delta
(x))})>y|X>x\right) &=&\lim_{x\rightarrow \infty }\mathbf{P}\left(
f(1-e^{-x(1+\delta (x))})>y|X>x(1+\delta (x))\right) \\
&=&\lim_{x\rightarrow \infty }\mathbf{P}\left( f\left( 1-e^{-x}\right)
>y|X>x\right) .
\end{eqnarray*}

Next we prove that \eqref{eq:gamma_x} implies \eqref{G22}. It is easy to see
that for any $\delta (x)\rightarrow 0$, for all sufficiently large~$x$,
\begin{equation*}
1-\exp (-x(1+\delta (x)+1/x))\geq \exp (-\exp (-x(1+\delta (x))))\geq 1-\exp
(-x(1+\delta (x))),
\end{equation*}%
and consequently, by the monotonicity of $f$,
\begin{equation*}
f(1-\exp (-x(1+\delta (x)+1/x)))\geq f(\exp (-\exp (-x(1+\delta (x)))))\geq
f(1-\exp (-x(1+\delta (x)))).
\end{equation*}%
Now, by \eqref{eq:gamma_x}, taking $\delta (x)$ to be $\delta (x)+1/x$ and $%
\delta (x)$ respectively, we have that both the first and the third random
variables, conditional on $\{X>x\}$, converge in law to $\gamma $. It
follows that the middle random variable also converges, implying \eqref{G22}.

To show \eqref{G3} from \eqref{G22} we simply take $\delta(x)=-\log(%
\lambda)/x$. Moreover, \eqref{G2} and \eqref{G3} are equivalent since $%
f\left( \exp \left( -\lambda e^{-x}\right) \right)$ is the Laplace
transform of $Z_{1}e^{-x}.$

Finally we derive \eqref{G1} from \eqref{G3}. In fact, for all $x$
sufficiently large,
\begin{equation*}
\exp (-\exp (-x))\geq 1-\exp (-x)\geq \exp (-e\cdot \exp (-x)).
\end{equation*}%
Hence, again by the monotonicity of $f$,
\begin{equation*}
f(\exp (-\exp (-x)))\geq f(1-\exp (-x))\geq f(\exp (-e\cdot \exp (-x))).
\end{equation*}%
The convergence in \eqref{G1} then follows from \eqref{G3} by taking $%
\lambda $ to be $1$ and $e$.
\end{proof}

\begin{corollary}
\label{cor:exp_gamma} Assume conditions \eqref{Ctail1} and \eqref{G1}. Then
for any function $\delta (x)$ satisfying $\lim_{x\rightarrow \infty }\delta
(x)~=~0$,
\begin{equation}
\mathbf{E}[\gamma ]=\lim_{x\rightarrow \infty }\mathbf{E}\left[
f(1-e^{-x(1+\delta (x))}))|X>x\right] =\lim_{x\rightarrow \infty }\mathbf{E}%
\left[ \left. \mathbf{E}_{\pi }\left[ e^{-\lambda {Z_{1}}/{e^{x}}}\right]
\right\vert X>x\right] .  \label{eq:gamma_f_01}
\end{equation}%
In particular,
\begin{equation}
\mathbf{E}\left[ \gamma \right] =\lim_{x\rightarrow \infty }\mathbf{P}%
(Z_{1}\leq e^{x(1+\delta (x))}|X>x).  \label{eq:offspring_jump_prob}
\end{equation}
\end{corollary}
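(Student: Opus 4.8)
The plan is to derive all three identities from the equivalences collected in Lemma~\ref{lemma:char_gamma}, together with the elementary remark that if a family of random variables taking values in the fixed bounded interval $[0,1]$ converges in distribution, then their expectations converge to the expectation of the limit (the identity map being bounded and continuous on $[0,1]$; note that $\gamma$, being a weak limit of $[0,1]$-valued variables, is itself supported in $[0,1]$, so $\mathbf{E}[\gamma]\in[0,1]$ is well defined). The conditioning on $\{X>x\}$ is legitimate for all large $x$ because $\mathbf{P}(X>x)=A(x)>0$ by \eqref{Ctail1}.

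For the first equality in \eqref{eq:gamma_f_01}: by Lemma~\ref{lemma:char_gamma}, condition \eqref{G1} is equivalent to \eqref{eq:gamma_x}, i.e.\ conditional on $\{X>x\}$ the $[0,1]$-valued variable $f(1-e^{-x(1+\delta(x))})$ converges in law to $\gamma$, whence $\mathbf{E}[f(1-e^{-x(1+\delta(x))})\mid X>x]\to\mathbf{E}[\gamma]$. For the second equality, note that since $Z_0=1$ and $f=f_0$ is the offspring generating function, $\mathbf{E}_\pi[e^{-\lambda Z_1/e^x}]=\mathbf{E}_\pi[(e^{-\lambda e^{-x}})^{Z_1}]=f(\exp(-\lambda\exp(-x)))$, which by \eqref{G3} (again equivalent to \eqref{G1}) converges in law to $\gamma$ conditional on $\{X>x\}$; boundedness gives convergence of the means to $\mathbf{E}[\gamma]$.

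For the ``in particular'' statement \eqref{eq:offspring_jump_prob}, the idea is to sandwich the generating-function value between truncated tail probabilities. For $y>0$ and $M>0$,
\[
(1-e^{-y})^{e^{y-M}}\,\mathbf{P}_\pi(Z_1\le e^{y-M})\ \le\ f(1-e^{-y})\ \le\ \mathbf{P}_\pi(Z_1\le e^{y+M})+(1-e^{-y})^{e^{y+M}},
\]
the left bound by keeping only $\{Z_1\le e^{y-M}\}$ in $\mathbf{E}_\pi[(1-e^{-y})^{Z_1}]$, the right bound by estimating $(1-e^{-y})^{Z_1}$ by $1$ on $\{Z_1\le e^{y+M}\}$ and by $(1-e^{-y})^{e^{y+M}}$ on its complement. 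Given $\delta(x)\to 0$, I would put $y=x(1+\delta(x))+M$ in the left inequality and $y=x(1+\delta(x))-M$ in the right one (so in both cases $y=x(1+\delta_\ast(x))$ with $\delta_\ast(x)\to 0$, and the shifted exponent equals $x(1+\delta(x))$), then take $\mathbf{P}(\cdot\mid X>x)$-expectations — the deterministic factors $(1-e^{-y})^{e^{y\mp M}}$ pulling out — and apply the first identity of the corollary with $\delta$ replaced by $\delta\pm M/x$. Since $(1-e^{-y})^{e^{y-M}}\to e^{-e^{-M}}$ and $(1-e^{-y})^{e^{y+M}}\to e^{-e^{M}}$ as $x\to\infty$, this yields
\[
e^{-e^{-M}}\limsup_{x\to\infty}\mathbf{P}\!\left(Z_1\le e^{x(1+\delta(x))}\mid X>x\right)\le \mathbf{E}[\gamma]\le \liminf_{x\to\infty}\mathbf{P}\!\left(Z_1\le e^{x(1+\delta(x))}\mid X>x\right)+e^{-e^{M}},
\]
and letting $M\to\infty$ gives \eqref{eq:offspring_jump_prob}.

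I do not expect a genuine obstacle here: the only point requiring care is the bookkeeping in the sandwich — checking that adding the constant $M$ to the exponent only replaces $\delta(x)$ by another function tending to $0$ (so the first identity applies verbatim), and that the truncation errors $(1-e^{-y})^{e^{y\mp M}}$ are deterministic with limits $e^{-e^{\mp M}}$ behaving correctly as $M\to\infty$. Everything else is immediate from Lemma~\ref{lemma:char_gamma} and bounded convergence.
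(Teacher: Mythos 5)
Your proof is correct. For the display \eqref{eq:gamma_f_01} you argue exactly as the paper does: bounded (dominated) convergence applied to the $[0,1]$-valued quantities in \eqref{eq:gamma_x} and \eqref{G3} of Lemma~\ref{lemma:char_gamma}. The paper's one-line proof does not explain how the ``in particular'' claim \eqref{eq:offspring_jump_prob} follows — it is not literally a special case of \eqref{eq:gamma_f_01} — and your sandwich
\[
(1-e^{-y})^{e^{y-M}}\,\mathbf{P}_\pi\!\left(Z_1\le e^{y-M}\right)\ \le\ f(1-e^{-y})\ \le\ \mathbf{P}_\pi\!\left(Z_1\le e^{y+M}\right)+(1-e^{-y})^{e^{y+M}},
\]
with $y=x(1+\delta(x))\pm M$ chosen so that the truncation thresholds land at $e^{x(1+\delta(x))}$ and the deterministic errors converge to $e^{-e^{\mp M}}$, supplies the missing step and works for any $\delta(x)\to0$ without requiring $e^{x\delta(x)}$ to stay bounded. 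Your observation that shifting the exponent by $\pm M$ merely replaces $\delta(x)$ by $\delta(x)\pm M/x$, which still tends to $0$, is exactly what makes the already-proved identity \eqref{eq:gamma_f_01} applicable, and the final passage $M\to\infty$ closes the argument cleanly.
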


\begin{proof}
This follows by applying the dominated convergence theorem to the
convergences in \eqref{eq:gamma_x} and~\eqref{G3}.
\end{proof}

\begin{lemma}
\label{lemma:typical_surv_prob} If \eqref{Expect}, \eqref{Ctail1} and
Assumption $\ref{asmptn:eta}$ are valid, then
\begin{equation}  \label{eq:surv_prob}
\lim_{n\rightarrow \infty }\mathbf{P}\left( \exp (-na-2n/\log n) \leq
\mathbf{P}_{\pi }(Z_{n}>0)\leq \exp (-na+n^{2/3})\right) =1.
\end{equation}
In particular, for any sequence $\delta_n$ such that $n(\delta_n - 2/\log
n)\rightarrow\infty$,
\begin{equation}  \label{eq:surv_certain}
\lim_{n\rightarrow \infty }\mathbf{P}\left( Z_{n}>0\ |\ Z_0 \geq
e^{n(a+\delta_n)}\right) =1.
\end{equation}
\end{lemma}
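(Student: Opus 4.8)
The plan is to sandwich $\mathbf{P}_\pi(Z_n>0)$ between the first-moment bound $\mathbf{P}_\pi(Z_n>0)\le \mathbf{E}_\pi Z_n$ and the second-moment (Cauchy--Schwarz) bound $\mathbf{P}_\pi(Z_n>0)\ge (\mathbf{E}_\pi Z_n)^2/\mathbf{E}_\pi[Z_n^2]$, and then to control the random quantities entering these bounds on one event of probability tending to $1$.

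I would first record the two relevant identities for the conditional generating function. Since $S_n=\sum_{i=1}^n\log f_{i-1}'(1)$ we have $\mathbf{E}_\pi Z_n=f_{0,n}'(1)=e^{S_n}$, and iterating the composition rule $\eta(g\circ h)=\eta(g)+\eta(h)/g'(1)$ (with $\eta$ as in \eqref{dfn:eta}) gives $\tfrac12 f_{0,n}''(1)=e^{2S_n}\sum_{j=0}^{n-1}\eta(\pi_j)\,e^{-S_j}$. Hence $\mathbf{E}_\pi[Z_n^2]=f_{0,n}''(1)+f_{0,n}'(1)=e^{S_n}+2e^{2S_n}\sum_{j=0}^{n-1}\eta(\pi_j)e^{-S_j}$ (finite for $\mathbf{P}$-a.e.\ environment, since Assumption~\ref{asmptn:eta}(i) forces $\eta(\pi_j)<\infty$ for every $j$), and the second-moment bound becomes
\[
\mathbf{P}_\pi(Z_n>0)\ \ge\ \Bigl(e^{-S_n}+2\sum_{j=0}^{n-1}\eta(\pi_j)e^{-S_j}\Bigr)^{-1}.
\]

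Next I would work on the event $E_n:=\bigl\{\max_{0\le k\le n}|S_k+ka|<n^{2/3}\bigr\}\cap\bigl\{\max_{0\le j<n}\eta(\pi_j)\le e^{n/\log n}\bigr\}$ and show $\mathbf{P}(E_n)\to1$. For the first set, Kolmogorov's maximal inequality for the centred random walk $S_k+ka$ gives $\mathbf{P}(\max_{k\le n}|S_k+ka|\ge n^{2/3})\le n\sigma^2/n^{4/3}\to0$; a maximal inequality is genuinely needed here, as summing the individual Chebyshev bounds over $k$ diverges. For the second set, taking $x=e^{n/\log n}$ in Assumption~\ref{asmptn:eta}(i) yields $\mathbf{P}(\eta(\pi_0)>e^{n/\log n})=o\bigl(1/(n(\log n)^{\delta})\bigr)$, so by the i.i.d.\ property and a union bound its complement has probability $o\bigl((\log n)^{-\delta}\bigr)$. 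On $E_n$ one has simultaneously $S_n\le-na+n^{2/3}$, hence $\mathbf{P}_\pi(Z_n>0)\le e^{S_n}\le e^{-na+n^{2/3}}$, and $S_k\ge-ka-n^{2/3}\ge-na-n^{2/3}$ for all $k\le n$, hence, feeding this into the displayed lower bound,
\[
\mathbf{P}_\pi(Z_n>0)\ \ge\ \Bigl(e^{na+n^{2/3}}+2n\,e^{n/\log n}e^{na+n^{2/3}}\Bigr)^{-1}\ \ge\ e^{-na-2n/\log n}
\]
for all large $n$, because $n^{2/3}+\log(3n)+n/\log n\le 2n/\log n$ eventually. As $E_n$ is contained in the event in \eqref{eq:surv_prob} for all large $n$, and $\mathbf{P}(E_n)\to1$, this proves \eqref{eq:surv_prob}.

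Finally, \eqref{eq:surv_certain} follows by the branching property: starting from $z\ge e^{n(a+\delta_n)}$ independent ancestors, $\mathbf{P}_\pi(Z_n>0)=1-(f_{0,n}(0))^z\ge 1-\exp\!\bigl(-z(1-f_{0,n}(0))\bigr)$; on $E_n$ this is at least $1-\exp\!\bigl(-e^{n(\delta_n-2/\log n)}\bigr)$, which tends to $1$ uniformly in such $z$ since $n(\delta_n-2/\log n)\to\infty$. Averaging over the environment and using $\mathbf{P}(E_n)\to1$ gives the claim. The step I expect to need the most care is the bookkeeping of scales: the precise form of the tail bound in Assumption~\ref{asmptn:eta}(i) is exactly what keeps $\max_{j<n}\eta(\pi_j)$ no larger than $e^{n/\log n}$ with high probability, which is in turn precisely what the correction term $2n/\log n$ in \eqref{eq:surv_prob} is designed to absorb; the random-walk estimates and the generating-function identities are otherwise routine.
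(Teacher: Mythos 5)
Your proof is correct and follows the same skeleton as the paper's --- sandwich $\mathbf{P}_\pi(Z_n>0)$ between the first-moment upper bound $e^{S_n}$ and a lower bound controlled by $\sum_k \eta(\pi_k)e^{-S_k}$, then show these quantities are tame on a high-probability event --- but with two genuinely more elementary substitutions. For the lower bound you use the second-moment (Cauchy--Schwarz/Paley--Zygmund) inequality $\mathbf{P}_\pi(Z_n>0)\ge(\mathbf{E}_\pi Z_n)^2/\mathbf{E}_\pi[Z_n^2]$, whereas the paper invokes the exact Geiger--Kersting representation \eqref{surv_prob} together with the pointwise estimate $0\le g_k(s)\le 2\eta_{k+1}$; after that estimate the two routes yield \emph{literally the same} inequality $\mathbf{P}_\pi(Z_n>0)\ge\bigl(e^{-S_n}+2\sum_{k=0}^{n-1}\eta_{k+1}e^{-S_k}\bigr)^{-1}$, so nothing is lost, and your variance computation is anyway what reappears in Lemma \ref{lemma:var}. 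For $\mathbf{P}(G_n)\to1$ you use Kolmogorov's maximal inequality in place of the paper's appeal to the functional CLT; you correctly flag that a maximal inequality, not term-by-term Chebyshev, is what is needed. Your derivation of \eqref{eq:surv_certain} from \eqref{eq:surv_prob} via $1-(f_{0,n}(0))^z\ge 1-\exp\!\bigl(-z(1-f_{0,n}(0))\bigr)$ also makes explicit what the paper leaves as an immediate consequence. A cosmetic point: your event caps $\max_{j<n}\eta(\pi_j)$ while the paper's $H_n$ caps $1+\sum_{k\le n}\eta_k$; your event is contained in theirs, so the subsequent arithmetic is unchanged. On the whole your version is self-contained and slightly more elementary, at no cost in the strength of the conclusion.
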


\begin{proof}
The second claim \eqref{eq:surv_certain} follows directly from the first
one, so we shall only prove~\eqref{eq:surv_prob}.
We have (see, for instance, \cite{GK00}) that
\begin{equation}\label{surv_prob}
\mathbf{P}_{\pi }(Z_{n}>0)=\left(e^{-S_{n}}+\sum_{k=0}^{n-1}g_{k}(f_{k+1,n}(0))e^{-S_{k}}\right) ^{-1},
\end{equation}
where%
\begin{equation*}
g_{k}(s):=\frac{1}{1-f_{k}(s)}-\frac{1}{f_{k}^{\prime }(1)(1-s)}
\end{equation*}%
meets the estimates
\begin{equation*}
0\leq g_{k}(s)\leq 2\eta _{k+1}\quad \mbox{with}\quad \eta _{k+1}:=\eta (\pi
_{k}).
\end{equation*}%

Introduce the events%
\begin{equation}  \label{dfn:Gn_Hn}
G_{n}:= \left\{ \max_{1\leq k\leq n}\left\vert S_{k}+ka\right\vert
<n^{2/3}\right\} \quad \mbox{and} \quad H_{n}:= \left\{ 1+\sum_{k=1}^{n}\eta
_{k}\leq 2ne^{n/\log n}\right\}.
\end{equation}%
By the functional central limit theorem, $\lim_{n\rightarrow\infty}\mathbf{P}%
\left( {G}_{n}\right)=1$. Further, by Assumption~1(i),
\begin{equation*}
1-\mathbf{P}\left( {H}_{n}\right) \leq n\mathbf{P}\left( \eta _{1}\geq
e^{n/\log n}\right) =n\times o\left( \frac{\log n}{n\left( \log \left(
n/\log n\right) \right) ^{1+\delta }}\right) =o\left( \frac{1}{\log ^{\delta
}n}\right) .
\end{equation*}%
Thus, $\lim_{n\rightarrow\infty} \mathbf{P}\left( {H}_{n}\right) = 1$, and,
consequently,
\begin{equation}
\lim_{n\rightarrow\infty}\mathbf{P}\left( G_{n}\cap {H}_{n}\right) =1.
\label{eq:GnHn_occur}
\end{equation}%
It then suffices to show that on the event ${G}_{n}\cap H_{n}$,
\begin{equation*}
\exp (-na-2n/\log n)\leq \mathbf{P}_{\pi }(Z_{n}>0)\leq \exp (-na+n^{2/3})
\end{equation*}%
for all sufficiently large $n$.

The upper bound follows from the evident estimates
\begin{equation*}
\mathbf{P}_{\pi }(Z_{n}>0)\leq \mathbf{E}_{\pi }\left[ Z_{n}\right] =\exp
(S_{n})\leq \exp (-na+n^{2/3}).
\end{equation*}%
As to the lower bound, since $g_{k}(s)\leq 2\eta _{k+1},$ we have
\begin{equation*}
\mathbf{P}_{\pi }(Z_{n}>0)\geq \frac{1}{e^{-S_{n}}+2\sum_{k=0}^{n-1}\eta
_{k+1}e^{-S_{k}}}.
\end{equation*}%
Finally observe that on the event $G_{n}\cap H_{n}$ we have
\begin{eqnarray}
e^{-S_{n}}+2\sum_{k=0}^{n-1}\eta _{k+1}e^{-S_{k}} &\leq
&2e^{na+n^{2/3}}\left( 1+\sum_{k=0}^{n-1}\eta _{k+1}\right)  \notag \\
&\leq &4e^{na+n^{2/3}}\cdot ne^{n/\log n} \leq e^{ na+2n/\log n}
\label{eq:bdd_on_GnHn}
\end{eqnarray}%
for all sufficiently large $n.$
\end{proof}

\begin{lemma}
\label{lemma:jump_surv_prob} Assume \eqref{Expect}, \eqref{Ctail1} and
Assumption $\ref{asmptn:eta}$. Then for any $k\in\mathbb{N}$,
\begin{equation*}
\lim_{n\rightarrow\infty}\mathbf{P}\left( Z_{n}>0\,|\,X_{1}>na, Z_0 = k
\right) = \mathbf{E}\left[ 1-\gamma^k \right].
\end{equation*}
\end{lemma}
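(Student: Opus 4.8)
The plan is to condition on the first environment $\pi _0$, use the branching property to decouple the subtrees emanating from time~$1$, and then reduce the statement to the characterization of $\gamma$ in Lemma~\ref{lemma:char_gamma} combined with the two-sided bound on the quenched survival probability from Lemma~\ref{lemma:typical_surv_prob}. Concretely, write $\pi '=(\pi _1,\pi _2,\dots )$ for the environment after the first step and set $q_{n-1}(\pi '):=1-f_{1,n}(0)$, which is the quenched probability that a single particle present at time~$1$ has a descendant alive at time~$n$. Since $Z_0=k$ and $f_{0,n}(0)=f_0(f_{1,n}(0))$, the quenched extinction probability factorizes as $\mathbf P_\pi (Z_n=0)=f_{0,n}(0)^k=f_0\bigl(1-q_{n-1}(\pi ')\bigr)^k$, so that
\[
\mathbf P\bigl(Z_n>0\mid X_1>na,\ Z_0=k\bigr)
=\mathbf E\Bigl[\,1-f_0\bigl(1-q_{n-1}(\pi ')\bigr)^k\ \Big|\ X_1>na\,\Bigr].
\]
Because $X_1=\log f_0'(1)$ depends only on $\pi _0$, which is independent of $\pi '$ (and of the conditioning event), Fubini rewrites the right-hand side as $\mathbf E_{\pi '}\bigl[\Psi _n(\pi ')\bigr]$, where $\Psi _n(\pi '):=\mathbf E\bigl[\,1-f_0(1-q_{n-1}(\pi '))^k\mid X_1>na\,\bigr]\in[0,1]$; it then suffices to show that $\Psi _n(\pi ')\to \mathbf E[1-\gamma ^k]$ for all $\pi '$ in an event $E_n$ with $\mathbf P(E_n)\to1$, since the contribution of $E_n^c$ is at most $\mathbf P(E_n^c)\to0$ and bounded convergence finishes.

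The crucial input is that $q_{n-1}(\pi ')$ is, with high probability, of the sharp order $e^{-na(1+o(1))}$. Indeed $q_{n-1}(\pi ')$ has the same law as $\mathbf P_\pi (Z_{n-1}>0)$ (started from one particle), so Lemma~\ref{lemma:typical_surv_prob} applied to the length-$(n-1)$ process yields a sequence $\delta _n\downarrow0$ such that $E_n:=\{e^{-na(1+\delta _n)}\le q_{n-1}(\pi ')\le e^{-na(1-\delta _n)}\}$ satisfies $\mathbf P(E_n)\to1$. On $E_n$, monotonicity of the generating function $f_0$ gives
\[
f_0\bigl(1-e^{-na(1-\delta _n)}\bigr)\ \le\ f_0\bigl(1-q_{n-1}(\pi ')\bigr)\ \le\ f_0\bigl(1-e^{-na(1+\delta _n)}\bigr),
\]
and, decisively, the two sandwiching quantities depend on $n$ and $\pi _0$ only, not on $\pi '$. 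Applying the equivalence \eqref{eq:gamma_x} of Lemma~\ref{lemma:char_gamma} along $x=na$ with the vanishing sequences $\mp\delta _n$, we obtain that, conditionally on $\{X_1>na\}$, both $f_0\bigl(1-e^{-na(1\mp\delta _n)}\bigr)$ converge in law to $\gamma$, the random variable of Assumption~\ref{asmptn:eta}(ii). Hence $1-f_0\bigl(1-e^{-na(1\mp\delta _n)}\bigr)^k$ converge in law to $1-\gamma ^k$, and, being $[0,1]$-valued, their conditional expectations given $\{X_1>na\}$ converge to $\mathbf E[1-\gamma ^k]$. Taking $\mathbf E[\,\cdot\mid X_1>na\,]$ through the displayed sandwich shows that for every $\pi '\in E_n$ the value $\Psi _n(\pi ')$ lies between these two conditional expectations, whence $\sup_{\pi '\in E_n}\bigl|\Psi _n(\pi ')-\mathbf E[1-\gamma ^k]\bigr|\to0$ and therefore $\mathbf E_{\pi '}[\Psi _n(\pi ')]=\mathbf E_{\pi '}[\Psi _n;E_n]+\mathbf E_{\pi '}[\Psi _n;E_n^c]\to \mathbf E[1-\gamma ^k]$.

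I expect the main obstacle to be precisely the bookkeeping around the randomness of $q_{n-1}(\pi ')$: it is \emph{not} pathwise of the form $e^{-na(1+o(1))}$, only on an event of probability tending to~$1$, so one cannot simply substitute a random exponent $1+\delta _n(\pi ')$ into Lemma~\ref{lemma:char_gamma}, whose statement concerns a deterministic function $\delta (x)$. Replacing the random argument $1-q_{n-1}(\pi ')$ by the two \emph{deterministic} arguments $1-e^{-na(1\mp\delta _n)}$ via the monotonicity of $f_0$, and verifying that this squeeze is compatible with both the conditional expectation $\mathbf E[\,\cdot\mid X_1>na\,]$ and the continuous map $t\mapsto t^k$, is what makes the argument run; everything else is a direct application of Lemmas~\ref{lemma:char_gamma} and~\ref{lemma:typical_surv_prob}.
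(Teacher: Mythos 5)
Your proposal is correct and takes essentially the same approach as the paper: condition on the first jump, write the quenched survival probability after time~$1$ as $e^{-na(1+\zeta(n))}$, control the random exponent by the two-sided bound of Lemma~\ref{lemma:typical_surv_prob}, and invoke the convergence to $\gamma$. You simply spell out the sandwiching argument (squeezing the random exponent between deterministic ones so that Lemma~\ref{lemma:char_gamma}/Corollary~\ref{cor:exp_gamma} apply) that the paper's proof implicitly uses when it passes from ``$|\zeta(n)|\leq\delta(n)$ with high probability'' to ``the conclusion follows from Corollary~\ref{cor:exp_gamma},'' and you also carry out the general-$k$ case which the paper leaves to the reader.
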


\begin{proof}
We only prove for the case when $k=1$. We have
\begin{eqnarray*}
\mathbf{P}\left( Z_{n}>0\,|\,X_{1}>na\right) =\mathbf{E}\left[ \mathbf{P}%
_{\pi }\left( Z_{n}>0\right) \,|\,X_{1}>na\right] =\mathbf{E}\left[%
1-f_{0}(f_{1,n}(0))\,|\,X_{1}>na\right] .
\end{eqnarray*}%
Write
\begin{equation*}
1-f_{1,n}(0)=\mathbf{P}_{\pi }(Z_{n}>0|Z_1=1)=e^{-an(1+\zeta (n))}.
\end{equation*}%
According to the previous lemma, there exists a deterministic function $%
\delta(n)\rightarrow 0$ as $n\rightarrow \infty $ such that
\begin{equation*}
\left\vert \zeta (n)\right\vert \leq \delta(n)
\end{equation*}%
with probability approaching 1 as $n\rightarrow \infty $. The conclusion
then follows from Corollary~\ref{cor:exp_gamma}.
\end{proof}

\begin{lemma}
\label{LRrem}Assume conditions \eqref{Expect}, \eqref{Ctail1}
and \eqref{remainder}. Then for any $\varepsilon >0$ there exists $M$ such
that for all
%$n\geq n_{0}(\varepsilon )$,
$n$ sufficiently large,
\begin{equation*}
\mathbf{P}\left( Z_{n}>0;\tau _{n}>M\right) =\mathbf{E}\left[ \mathbf{P}%
_{\pi }(Z_{n}>0);\tau _{n}>M\right] \leq \varepsilon \mathbf{P}\left(
X>na\right) .
\end{equation*}
\end{lemma}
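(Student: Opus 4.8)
The plan is to bound $\mathbf{P}_\pi(Z_n>0)$ using the representation \eqref{surv_prob} and then split the expectation $\mathbf{E}[\mathbf{P}_\pi(Z_n>0);\tau_n>M]$ according to where the minimum $\tau_n$ of the walk is attained. From \eqref{TT11} and the duality decomposition already used in the proof of Corollary \ref{C_minfinite}, on the event $\{\tau_n = i\}$ we have $\mathbf{P}_\pi(Z_n>0)\le e^{S_{\tau_n}}=e^{S_i}$, but this crude bound alone is not summable at the right rate; what is needed is the sharper fact that after the walk reaches its minimum at time $i$, the post-minimum increments $S_{i+1}-S_i,\dots,S_n-S_i$ are all nonnegative, and on the first $i$ steps the walk stays above $S_i$. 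Concretely, I would write $\mathbf{P}_\pi(Z_n>0)\le \mathbf{P}_\pi(Z_i>0)\le \min_{0\le l\le i}e^{S_l}=e^{S_i}$ and, using the Markov property of the environment together with the independence of the pre-$\tau_n$ and post-$\tau_n$ segments (again via duality, as in Lemma \ref{LExponent}), decompose
\begin{equation*}
\mathbf{E}\left[\mathbf{P}_\pi(Z_n>0);\tau_n>M,\tau_n<n\right]\le \sum_{i=M+1}^{n-1}\mathbf{E}\left[e^{S_i};\tau_i=i\right]\cdot \mathbf{P}\left(L_{n-i}\ge 0\right),
\end{equation*}
plus the boundary term $\mathbf{E}[e^{S_n};\tau_n=n,\,n>M]=\mathbf{E}[e^{S_n};M_n<0]$ coming from $\tau_n=n$.

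Next I would feed in the asymptotics already available: by Lemma \ref{LExponent}, $\mathbf{E}[e^{S_i};\tau_i=i]\sim (K_1/i)\mathbf{P}(X>ai)$, and by Proposition \ref{Ttail}, $\mathbf{P}(L_{n-i}\ge 0)\sim e^D\mathbf{P}(X>a(n-i))$. Since $\mathbf{P}(X>ai)$ is regularly varying of index $-\beta$ with $\beta>2$, each product $\frac{1}{i}\mathbf{P}(X>ai)\mathbf{P}(X>a(n-i))$ is, after dividing by $\mathbf{P}(X>an)$, summable in $i$ over $M+1\le i<n$ uniformly in $n$, and the tail of that sum (the part with $i>M$) can be made smaller than $\varepsilon/2$ by choosing $M$ large — this is exactly the "single big jump" heuristic: the dominant contribution to $\{Z_n>0\}$ comes from a big jump at a bounded time, and the mass from jumps after time $M$ is negligible. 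One must check the regime $i$ close to $n$ separately, where $\mathbf{P}(X>a(n-i))$ is not small; there the factor $\mathbf{E}[e^{S_i};\tau_i=i]$ itself contributes $\sum_{i\le n}\frac1i\mathbf{P}(X>ai)$, which converges, and after normalizing by $\mathbf{P}(X>an)$ and using \eqref{remainder} together with the uniform local estimate of Proposition \ref{Tlocal} this block is also $O(1)$ and its $i>M$ tail is small. The $\tau_n=n$ term is handled directly by Lemma \ref{LExponent}, which gives $\mathbf{E}[e^{S_n};M_n<0]\sim (K_1/n)\mathbf{P}(X>an)=o(\mathbf{P}(X>an))$, so it is absorbed for large $n$.

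The main obstacle is making the splitting of $\{Z_n>0\}\cap\{\tau_n>M\}$ rigorous uniformly in $n$: the two asymptotic inputs (Lemma \ref{LExponent} and Proposition \ref{Ttail}) are pointwise-in-$i$ statements, so to sum them against each other I need uniform upper bounds of the form $\mathbf{E}[e^{S_i};\tau_i=i]\le C\,i^{-1}\mathbf{P}(X>ai)$ and $\mathbf{P}(L_j\ge 0)\le C\,\mathbf{P}(X>aj)$ valid for all $i,j\ge 1$, and then a dominated-convergence / Potter-bounds argument on the ratio $\mathbf{P}(X>ai)\mathbf{P}(X>a(n-i))/\mathbf{P}(X>an)$ over the full range $1\le i\le n-1$. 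Establishing these uniform bounds (which do follow from the cited theorems in \cite{BB2005} and \cite{CNW}, but need to be extracted carefully) and controlling the "middle range" $M<i<n-M$ is where the real work lies; once that is in place, choosing $M$ so that the uniformly-summable tail is below $\varepsilon$ completes the proof.
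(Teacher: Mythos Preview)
Your approach is essentially the paper's: bound $\mathbf{P}_\pi(Z_n>0)\le e^{S_{\tau_n}}$, decompose over $\{\tau_n=k\}$ using $\mathbf{E}[e^{S_{\tau_n}};\tau_n=k]=\mathbf{E}[e^{S_k};\tau_k=k]\,\mathbf{P}(L_{n-k}\ge 0)$, and feed in Lemma~\ref{LExponent} and Proposition~\ref{Ttail}. The ``main obstacle'' you worry about (uniform-in-$i$ bounds, Potter bounds, a delicate middle range) is not actually present: the paper simply splits the sum at $k=n/2$, and on the first half bounds $\mathbf{P}(L_{n-k}\ge 0)\le \mathbf{P}(L_{[(n+1)/2]}\ge 0)\sim C\,\mathbf{P}(X>an)$ and uses that $\sum_{k\ge M}\mathbf{E}[e^{S_k};\tau_k=k]$ is the tail of a convergent series (hence small for $M$ large), while on the second half bounds $\mathbf{E}[e^{S_k};\tau_k=k]\le C\,n^{-1}\mathbf{P}(X>an)$ for $k>n/2$ and uses that $\sum_{j\ge 0}\mathbf{P}(L_j\ge 0)<\infty$ --- no product of two regularly varying factors ever needs to be controlled.
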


\begin{proof}
Using the inequality $\mathbf{P}_{\pi }(Z_{n}>0)\leq e^{S_{\tau _{n}}} $
established in~\eqref{TT11}, we have by Proposition \ref{Ttail} and Lemma~%
\ref{LExponent} that
\begin{eqnarray*}
&&\mathbf{E}\left[ \mathbf{P}_{\pi }(Z_{n}>0);\tau _{n}>M\right] \leq
\sum_{k=M}^{n}\mathbf{E}\left[ e^{S_{\tau _{n}}};\tau _{n}=k\right] \\
&=&\sum_{M\leq k\leq n/2}\mathbf{E}\left[ e^{S_{k}};\tau _{k}=k\right]
\mathbf{P}\left( L_{n-k}\geq 0\right) +\sum_{n/2<k\leq n}\mathbf{E}\left[
e^{S_{k}};\tau _{k}=k\right] \mathbf{P}\left( L_{n-k}\geq 0\right) \\
&\leq &\mathbf{P}\left( L_{[(n+1)/2]}\geq 0\right) \sum_{k=M}^{\infty }%
\mathbf{E}\left[ e^{S_{k}};\tau _{k}=k\right] +C\frac{\mathbf{P}\left(
X>an\right) }{n}\sum_{k\leq n/2}\mathbf{P}\left( L_{k}\geq 0\right) \\
&\leq &\varepsilon \mathbf{P}\left( X>an\right) .
\end{eqnarray*}
\end{proof}

Now we are ready to prove Theorem \ref{Textinction}.

\begin{proof}[Proof of Theorem \protect\ref{Textinction}]
For fixed $M$ and $N$ we write%
\begin{eqnarray*}
\mathbf{P}\left( Z_{n}>0\right) &=&\mathbf{P}\left( Z_{n}>0;U_{n}\leq
NM\right) \\
&&+\mathbf{P}\left( Z_{n}>0;U_{n}>NM;\tau _{n}>M\right) +\mathbf{P}\left(
Z_{n}>0;U_{n}>NM;\tau _{n}\leq M\right) .
\end{eqnarray*}%
By Lemma \ref{LRrem}, for any $\varepsilon >0$ there exists $M$ such that
for all sufficiently large $n$,
\begin{equation*}
\mathbf{P}\left( Z_{n}>0;U_{n}>NM;\tau _{n}>M\right) \leq \mathbf{P}\left(
Z_{n}>0;\tau _{n}>M\right) \leq \varepsilon \mathbf{P}\left( X>na\right).
\end{equation*}%
Moreover, by Propositions \ref{Tdurr} and \ref{Ttail} there exists $N$ such
that for all sufficiently large $n$,
\begin{eqnarray*}
\mathbf{P}\left( Z_{n}>0;U_{n}>NM;\tau _{n}\leq M\right) &\leq &\mathbf{P}%
\left( U_{n}>NM;\tau _{n}\leq M\right) \\
&=&\sum_{k=0}^{M}\mathbf{P}\left( U_{n}>NM;\tau _{n}=k\right) \\
&\leq &\sum_{k=0}^{M}\mathbf{P}\left( \tau _{k}=k\right) \mathbf{P}\left(
U_{n}>\left( N-1\right) M;\tau >n-M\right) \\
&\leq &(M+1)\mathbf{P}\left( U_{n}>\left( N-1\right) M;\tau >n-M\right) \\
&\leq &\varepsilon \mathbf{P}\left( \tau >n-M\right)
\leq 2e^D\varepsilon \mathbf{P}\left( X>na\right) .
\end{eqnarray*}
Hence the main contribution to $\mathbf{P}\left( Z_{n}>0\right)$ comes from $%
\mathbf{P}\left( Z_{n}>0;U_{n}\leq NM\right)$, i.e., when there is a big
jump of the associated random walk at the beginning.

To proceed, we introduce the events
\begin{equation}
A_{k}=A_{k}(n):=\left\{ X_{i}\leq na,1\leq i\leq k\right\}, \q k=1,2,\ldots,n.
\label{A_events}
\end{equation}%
For each fixed $j$, we have by the Markov property
\begin{eqnarray}
&&\mathbf{P}( Z_{n}>0;U_{n}=j)\notag\\
 &=&\mathbf{P}(A_{j-1};X_{j}>na;Z_{n}>0)  \notag
\\
&=&\sum_{k=1}^{\infty }\mathbf{P}(A_{j-1}; Z_{j-1}=k) \mathbf{P}(X_1>na)%
\mathbf{P}\left( Z_{n-j+1}>0|X_1>na;Z_{0}=k\right).  \label{eq:surv_Un}
\end{eqnarray}%
Clearly, $\mathbf{P}(A_{j-1},Z_{j-1}=k)=\mathbf{P}(A_{j-1}(n),Z_{j-1}=k)$
increases to $\mathbf{P}(Z_{j-1}=k)$ as $n\rightarrow \infty $. Dividing
both sides of \eqref{eq:surv_Un} by $\mathbf{P}\left( X_{1}>na\right) $ and
applying the dominated convergence theorem and Lemma \ref{lemma:jump_surv_prob} yield
%as $ n\rightarrow \infty $
\begin{eqnarray}
\frac{\mathbf{P}\left( Z_{n}>0;U_{n}=j\right) }{\mathbf{P}\left(
X_{1}>na\right) } &=&\sum_{k=1}^{\infty }\mathbf{P}\left(
Z_{j-1}=k;A_{j-1}\right) \mathbf{P}\left( Z_{n-j+1}>0|X_{1}>na,Z_{0}=k\right)
\notag \\
&\sim &\sum_{k=1}^{\infty }\mathbf{P}\left( Z_{j-1}=k\right) \mathbf{E}\left[
1-\gamma ^{k}\right]  \notag \\
&=&\mathbf{E}\left[ 1-f_{0,j-1}\left( \gamma \right) \right] .  \label{RRR}
\end{eqnarray}%
The last equality holds because of the independence assumption that we put
on $\gamma $ and $\{f_{0,j}\}$. Hence%
\begin{equation}
\mathbf{P}\left( Z_{n}>0\right) \sim \sum_{j=1}^{\infty }\mathbf{E}\left[
1-f_{0,j-1}\left( \gamma \right) \right] \cdot \mathbf{P}\left( X>na\right)
\label{RRR1}
\end{equation}%
as desired.
\end{proof}

\section{Functional Limit Theorems Conditional on Non-extinction}

The proof of Theorem \ref{Textinction} shows that the process survives owing
to one big jump of the associated random walk which happens at the very
beginning of the evolution of the process. This motivates the study of the
conditional distribution of $U_{n}$ given $\{Z_{n}>0\}$, which is the
content of the next lemma.

\begin{lemma}
\label{lemma:U_n} For any $j\geq 1$,
\begin{equation}
\lim_{n\rightarrow \infty }\mathbf{P}(U_{n}=j|Z_{n}>0)
=\mathbf{E}\left[1-f_{0,j-1}(\gamma )\right] /K,
\label{eq:U_n_limit}
\end{equation}%
where $K$ is as in Theorem $\ref{Textinction}$. In particular, for any $%
\varepsilon>0$, there exists $M>0$ such that
\begin{equation}
\limsup_{n\rightarrow \infty }\mathbf{P}(U_{n}>M|Z_{n}>0)\leq \varepsilon .
\label{eq:U_n_UAC}
\end{equation}
\end{lemma}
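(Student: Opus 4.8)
The plan is to read off the first statement \eqref{eq:U_n_limit} directly from the computation already carried out in the proof of Theorem~\ref{Textinction}, and then to deduce the uniform bound \eqref{eq:U_n_UAC} from the fact that the resulting limiting weights exhaust all the mass.

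For \eqref{eq:U_n_limit}: in the proof of Theorem~\ref{Textinction} the estimate \eqref{RRR} establishes that for each fixed $j\geq 1$,
\begin{equation*}
\frac{\mathbf{P}\left( Z_{n}>0;U_{n}=j\right) }{\mathbf{P}\left( X>na\right) }\;\longrightarrow\; \mathbf{E}\left[ 1-f_{0,j-1}\left( \gamma \right) \right]\qquad (n\to\infty),
\end{equation*}
where we used $\mathbf{P}(X_{1}>na)=\mathbf{P}(X>na)$. On the other hand, Theorem~\ref{Textinction} gives $\mathbf{P}(Z_{n}>0)\sim K\,\mathbf{P}(X>na)$ with $K\in(0,\infty)$. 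Dividing these two asymptotics,
\begin{equation*}
\mathbf{P}(U_{n}=j\mid Z_{n}>0)=\frac{\mathbf{P}(Z_{n}>0;U_{n}=j)/\mathbf{P}(X>na)}{\mathbf{P}(Z_{n}>0)/\mathbf{P}(X>na)}\;\longrightarrow\;\frac{\mathbf{E}[1-f_{0,j-1}(\gamma)]}{K},
\end{equation*}
which is \eqref{eq:U_n_limit}.

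For \eqref{eq:U_n_UAC}: set $p_{j}:=\mathbf{E}[1-f_{0,j-1}(\gamma)]/K\geq 0$ for $j\geq 1$. The key point is that $(p_{j})_{j\geq1}$ is a \emph{genuine} probability distribution on $\mathbb{N}$, since by the definition \eqref{DefK} of $K$, after reindexing $j\mapsto j-1$,
\begin{equation*}
\sum_{j=1}^{\infty}p_{j}=\frac{1}{K}\sum_{j=0}^{\infty}\mathbf{E}\left[1-f_{0,j}(\gamma)\right]=\frac{K}{K}=1 .
\end{equation*}
Given $\varepsilon>0$, choose $M$ with $\sum_{j=1}^{M}p_{j}>1-\varepsilon$. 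Since $U_{n}\geq1$ and the events $\{U_{n}=j\}$, $1\leq j\leq M$, are disjoint subsets of $\{Z_n>0\}$, we have $\mathbf{P}(U_{n}>M\mid Z_{n}>0)=1-\sum_{j=1}^{M}\mathbf{P}(U_{n}=j\mid Z_{n}>0)$; letting $n\to\infty$ and applying \eqref{eq:U_n_limit} termwise to this finite sum yields
\begin{equation*}
\limsup_{n\to\infty}\mathbf{P}(U_{n}>M\mid Z_{n}>0)=1-\sum_{j=1}^{M}p_{j}<\varepsilon ,
\end{equation*}
which is exactly \eqref{eq:U_n_UAC}.

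I do not expect a real obstacle here: once Theorem~\ref{Textinction} and its proof are available, the whole argument is bookkeeping. The only step needing any care is the assertion that no mass escapes to $\{U_{n}=\infty\}$ in the limit — equivalently, that $\sum_{j\geq1}p_{j}=1$ — and this is precisely the content of the identity $K=\sum_{j\geq0}\mathbf{E}[1-f_{0,j}(\gamma)]$ established in Theorem~\ref{Textinction} (with finiteness from Corollary~\ref{C_minfinite}); without it the ``in particular'' clause would not follow formally from the pointwise convergence.
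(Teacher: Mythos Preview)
Your proof is correct and follows the same route as the paper: the first statement is obtained by dividing \eqref{RRR} by the asymptotic from Theorem~\ref{Textinction} (the paper writes this as the product of two ratios and cites \eqref{RRR} and \eqref{RRR1}), and the tightness statement is deduced from the fact that the limiting weights sum to~$1$ by the very definition \eqref{DefK} of $K$ (the paper cites \eqref{eq:U_n_limit} and Corollary~\ref{C_minfinite} for this, leaving the summation implicit). One wording slip: the events $\{U_n=j\}$ are \emph{not} subsets of $\{Z_n>0\}$; the identity $\mathbf{P}(U_n>M\mid Z_n>0)=1-\sum_{j=1}^{M}\mathbf{P}(U_n=j\mid Z_n>0)$ holds simply because $\{U_n=1\},\ldots,\{U_n=M\},\{U_n>M\}$ partition the whole space and conditional probability is a probability measure.
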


\begin{proof}
The first claim follows from the representation
\begin{equation*}
\mathbf{P}(U_{n}=j|Z_{n}>0)=\frac{\mathbf{P}(Z_{n}>0;U_{n}=j)}{\mathbf{P}%
(X_{1}>na)}\times \frac{\mathbf{P}(X_{1}>na)}{\mathbf{P}(Z_{n}>0)}
\end{equation*}%
and relationships (\ref{RRR}) and (\ref{RRR1}).

Estimate (\ref{eq:U_n_UAC}) follows from \eqref{eq:U_n_limit} and Corollary %
\ref{C_minfinite}.
\end{proof}

Thus, we have demonstrated that given survival to time $n$, there must be a
big jump at the early time period. Next lemma complements this by showing
that for survival of the process such a big jump will be accompanied by a
population explosion.

Let $Z_{j}(i)$ be the offspring size of the $i$-th particle existing in
generation $j-1$, and, as we shall deal with $\max_{1\leq i\leq
Z_{U_{n}-1}}Z_{U_{n}}(i)$ repeatedly,  define
\begin{equation}
N_{U_{n}}:=\max_{1\leq i\leq Z_{U_{n}-1}}Z_{U_{n}}(i).
\label{dfn:offspring_U_n}
\end{equation}

\begin{lemma}
\label{lemma:Z_U_n} For any sequence $h_n$ such that $h_n\leq n$ and $h_n\to\infty$,
and $\delta _{n}\rightarrow 0$ satisfying $n(\delta _{n}-2/\log n)\rightarrow \infty $,
\begin{equation*}
\lim_{n\rightarrow \infty }\mathbf{P}\left( U_{n}<h_n, N_{U_{n}}\geq
e^{n(a+\delta_n)}\,|\,Z_{n}>0\right) =1.
\end{equation*}
\end{lemma}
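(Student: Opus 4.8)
The plan is to decompose $\{Z_n>0\}$ according to the value of $U_n$ and the behaviour of the population at the moment of the big jump, and to show that the only non‑negligible contribution comes from the event on which $U_n$ is bounded (which we already control via Lemma~\ref{lemma:U_n}) \emph{and} the big jump is accompanied by the population explosion $N_{U_n}\geq e^{n(a+\delta_n)}$. Concretely, fix $\varepsilon>0$; by \eqref{eq:U_n_UAC} choose $M$ with $\limsup_n\mathbf{P}(U_n>M\,|\,Z_n>0)\leq\varepsilon$, so that it suffices to bound $\mathbf{P}(U_n\leq M,\ N_{U_n}<e^{n(a+\delta_n)},\ Z_n>0)$ and show it is $o(\mathbf{P}(X>na))$. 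On $\{U_n=j\}$ with $j\leq M$, condition on $\mathcal{F}_{j-1}$ and on the environment at generation $j-1$: given $Z_{j-1}=k$ and $X_j>na$, the offspring sizes $Z_j(1),\dots,Z_j(k)$ are i.i.d. copies of $Z_1$ under the size‑biased-by-$\{X>na\}$ environment, and we need the event that none of them reaches $e^{n(a+\delta_n)}$ while still $Z_n>0$.

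The key input is Assumption~1(ii) in the quantitative form of Corollary~\ref{cor:exp_gamma}, specifically \eqref{eq:offspring_jump_prob}: conditional on $\{X>na\}$, $\mathbf{P}(Z_1\leq e^{na(1+\delta_n)}\,|\,X>na)\to\mathbf{E}[\gamma]$, and more importantly, for the \emph{individual line of descent} to survive we must have, by Lemma~\ref{lemma:jump_surv_prob}, that on the complementary event $\{Z_1 > e^{n(a+\delta_n)}\}$ the probability of ultimate survival is bounded below (it converges to $\mathbf{E}[1-\gamma]>0$ after the explosion, via Lemma~\ref{lemma:typical_surv_prob}). The first step is therefore: on $\{Z_{j-1}=k\}$, either at least one of the $k$ children explodes (i.e. $N_{U_n}\geq e^{n(a+\delta_n)}$), or all $k$ children stay below $e^{n(a+\delta_n)}$ — and in the latter case each sub‑population, starting from at most $e^{n(a+\delta_n)}$ particles but subjected afterwards only to a \emph{typical} random walk trajectory of length $n-j\le n$, has $\mathbf{E}_\pi$‑conditional survival probability bounded by $e^{S'}$ with $S'$ the increment walk after time $j$, which by Lemma~\ref{LExponent}/Lemma~\ref{LRrem}‑type estimates contributes a vanishing fraction of $\mathbf{P}(X>na)$ once we subtract off the contribution of the big jump we already removed. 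The second step is to sum the resulting bound over $j\le M$ and $k\ge 1$ using $\sum_k\mathbf{P}(Z_{j-1}=k)(\cdots)$ and dominated convergence, exactly as in \eqref{eq:surv_Un}--\eqref{RRR}, to conclude $\mathbf{P}(U_n\leq M,\ N_{U_n}<e^{n(a+\delta_n)},\ Z_n>0)=o(\mathbf{P}(X>na))$; dividing by $\mathbf{P}(Z_n>0)\sim K\mathbf{P}(X>na)$ and letting $\varepsilon\downarrow0$ (then $M\to\infty$, then $h_n\to\infty$ absorbs the easy inclusion $\{U_n\le M\}\subset\{U_n<h_n\}$ for large $n$) finishes the proof.

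The main obstacle is the second step: showing that if the big jump occurs but is \emph{not} accompanied by an explosion past $e^{n(a+\delta_n)}$, then survival to time $n$ is asymptotically impossible relative to the scale $\mathbf{P}(X>na)$. The subtlety is that one cannot merely use $\mathbf{P}_\pi(Z_n>0)\le\mathbf{E}_\pi Z_n=e^{S_n}$, because $e^{S_n}$ with a big jump at time $j$ is typically of order $e^{na-(n-j)a}$, which is not small. The correct mechanism is that \emph{without} the explosion the starting population at time $U_n$ is at most $e^{n(a+\delta_n)}$, and then by Lemma~\ref{lemma:typical_surv_prob} (applied with $n$ replaced by $n-U_n$ and the non‑explosion cap playing the role of $e^{n(a+\delta_n)}$ being below the survival threshold $e^{(n-j)(a+\delta'_{n-j})}$ once $j\ge1$ — indeed $n(a+\delta_n)<(n-j)a$ is \emph{false}, so one must instead iterate the branching one more generation and use that $Z_{U_n+1}$ is then again subcritical with a typical walk), the $\mathbf{P}_\pi$‑survival probability is $\exp(-(n-j)a+o(n))$ with $\mathbf{P}$‑probability tending to $1$, and this matched against $\mathbf{P}(A_{j-1},Z_{j-1}=k,X_j>na)\le\mathbf{P}(X>na)\cdot\mathbf{P}(Z_{j-1}=k)$ is indeed $o(\mathbf{P}(X>na))$ after summation. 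Making this dichotomy quantitative and uniform in $j\le M$ and in the environment — i.e. carefully separating ``explosion'' from ``no explosion but still atypically large child'' and showing the latter has conditional probability $\to0$ — is the technical heart, and it is exactly where Assumption~1(ii) (that $\gamma<1$ with positive probability, so that the explosion event carries all the surviving mass) is used in an essential way.
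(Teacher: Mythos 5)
Your strategy is genuinely different from the paper's and, as written, it has a real gap. The paper does not try to bound the complementary event at all; instead it directly \emph{lower}-bounds the probability of the good event: it decomposes $\mathbf{P}\left( U_{n}<h_n,\ N_{U_{n}}\geq e^{n(a+\delta_n)},\ Z_n>0\right)$ as a double sum over $j<h_n$ and $k\geq 1$ of three factors, shows the first factor is $\sim\mathbf{P}(Z_{j-1}=k)\,\mathbf{P}(X>na)$, the second tends to $\mathbf{E}[1-\gamma^k]$ (via \eqref{eq:offspring_jump_prob}), and the third tends to $1$ (via \eqref{eq:surv_certain}), and then applies Fatou's lemma to get $\liminf_n \mathbf{P}(\cdots)/\mathbf{P}(X>na)\geq\sum_j\mathbf{E}[1-f_{0,j-1}(\gamma)]=K$. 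Since $\mathbf{P}(Z_n>0)\sim K\mathbf{P}(X>na)$ by Theorem~\ref{Textinction}, the conditional probability is forced to $1$, and the complementary ``no explosion'' event never needs to be touched. That sidestep is exactly what makes the argument clean.

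Your version, by contrast, must show directly that $\mathbf{P}\left(U_n\leq M,\ N_{U_n}<e^{n(a+\delta_n)},\ Z_n>0\right)=o\bigl(\mathbf{P}(X>na)\bigr)$, and you have correctly located the difficulty but not resolved it. On $\{N_{U_n}<e^{n(a+\delta_n)}\}$ the cap is on the \emph{maximum} over the $Z_{U_n-1}$ sibling families, not on their sum: if $Z_{U_n-1}=k\geq 2$, you can have $Z_{U_n}$ of order $k\cdot e^{n(a+\delta_n)}$, and starting from $e^{na+n\delta_n+O(1)}$ particles and a \emph{typical} post-jump trajectory (survival rate $\geq e^{-na-2n/\log n}$ by \eqref{eq:surv_prob}), $\mathbf{P}_\pi(Z_n>0)\to 1$, not $\to 0$. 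Your claim ``without the explosion ... survival to time $n$ is asymptotically impossible'' is therefore false as stated, and the proposed fix (``iterate the branching one more generation'') does not address it — at generation $U_n+1$ you again face a typical subcritical walk from a potentially huge population. The correct patch is the full $0$-or-$\infty$ dichotomy from part~(iv) of Lemma~\ref{lemma:char_gamma}: conditional on $\{X>na\}$, each offspring is either $o(e^{\varepsilon n})$ or $\geq e^{n(a+\delta_n)}$ with conditional probability $\to 1$, so on the complement of the explosion event one has, with conditional probability $\to1$, $Z_{U_n}\leq k\,e^{\varepsilon n}\ll e^{(n-j)a}$, whence $\mathbf{P}_\pi(Z_n>0)\to 0$. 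That intermediate-regime control is the missing step in your write-up; without it the complement bound does not go through. The paper's Fatou route is both shorter and avoids needing this finer characterization at all.
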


\begin{proof}
We first estimate the probability
\begin{eqnarray}
&&\mathbf{P}\left( U_{n}<h_n, N_{U_{n}}\geq e^{n(a+\delta_n)},
Z_n>0\right)  \notag \\
&=&\sum_{j<h_n}\sum_{k=1}^{\infty }\mathbf{P} (U_{n}=j,Z_{j-1}=k) \cdot
\mathbf{P}\left( N_{U_{n}}\geq e^{n(a+\delta_n)}\,|\,U_{n}=j,Z_{j-1}=k\right)  \notag \\
&& \cdot\ \mathbf{P} (Z_n >0 \,|\, N_{U_{n}}\geq e^{n(a+\delta_n)}, U_{n}=j, Z_{j-1}=k).
\label{eq:surv_decomp}
\end{eqnarray}
Recall the events $A_j(n)$ that we defined in \eqref{A_events}. As $%
n\rightarrow \infty $,
\begin{equation*}
\mathbf{P}(U_{n}=j,Z_{j-1}=k) =\mathbf{P}(Z_{j-1}=k, A_{j-1}(n))\cdot
\mathbf{P}(X>an) \sim \mathbf{P}(Z_{j-1}=k)\cdot \mathbf{P}(X>an).
\end{equation*}
Moreover, by (a simple generalization of) \eqref{eq:offspring_jump_prob},
for any fixed $j$ and $k$,
\begin{equation*}
\lim_{n\rightarrow \infty }\mathbf{P}\left( N_{U_{n}}\geq e^{n(a+\delta_n)}\,|\,U_{n}=j,Z_{j-1}=k\right) =\mathbf{E}[1-\gamma ^{k}].
\end{equation*}
Finally, by \eqref{eq:surv_certain} in Lemma \ref{lemma:typical_surv_prob},
\begin{equation*}
\mathbf{P} (Z_n >0 \,|\, N_{U_{n}}\geq e^{n(a+\delta_n)}, U_{n}=j, Z_{j-1}=k)
\rightarrow 1.
\end{equation*}
Dividing both sides of \eqref{eq:surv_decomp} by $\mathbf{P}(Z_n>0)$ and
applying Theorem \ref{Textinction} and Fatou's lemma we get the conclusion.
\end{proof}

The arguments above lead to the following lemma, which says that
conditioning on $\{Z_{n}>0\}$ is asymptotically equivalent to conditioning
on $\{U_{n}<h_n,N_{U_{n}}\geq e^{n(a+\delta_n)}\}$.

\begin{lemma}
\label{lemma:conditioning}  For any
sequence $h_n$  such that $h_n\leq n$ and $h_n\to\infty$,
and $\delta _{n}\rightarrow 0$
satisfying $n(\delta _{n}-2/\log n)\rightarrow \infty $,
\begin{equation}
||\mathbf{P}(\cdot \,|\,Z_{n}>0)-\mathbf{P}(\cdot
\,|\,U_{n}<h_n,N_{U_{n}}\geq e^{n(a+\delta_n)})||_{TV}\rightarrow 0,
\label{eqn:tv_diff}
\end{equation}%
where $||\cdot ||_{TV}$ denotes the total variation distance.
\end{lemma}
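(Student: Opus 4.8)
The plan is to derive \eqref{eqn:tv_diff} from Lemma~\ref{lemma:Z_U_n} together with a sharp asymptotic for $\mathbf{P}(B_{n})$, where throughout I write $A_{n}:=\{Z_{n}>0\}$, $B_{n}:=\{U_{n}<h_{n},\ N_{U_{n}}\ge e^{n(a+\delta_{n})}\}$ and $E_{n}:=A_{n}\cap B_{n}$. The starting point is the elementary fact that for events $E\subseteq F$ with $\mathbf{P}(E)>0$,
\[
\sup_{C}\bigl|\mathbf{P}(C\mid F)-\mathbf{P}(C\mid E)\bigr|\ \le\ 2\Bigl(1-\frac{\mathbf{P}(E)}{\mathbf{P}(F)}\Bigr),
\]
which one obtains by writing $\mathbf{P}(C\cap F)=\mathbf{P}(C\cap E)+\mathbf{P}(C\cap(F\setminus E))$ and bounding the two pieces. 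Applying this with $(F,E)=(A_{n},E_{n})$ and with $(F,E)=(B_{n},E_{n})$, and then using the triangle inequality for $\|\cdot\|_{TV}$, the claim \eqref{eqn:tv_diff} reduces to showing
\[
\frac{\mathbf{P}(E_{n})}{\mathbf{P}(A_{n})}\longrightarrow1
\qquad\text{and}\qquad
\frac{\mathbf{P}(E_{n})}{\mathbf{P}(B_{n})}\longrightarrow1 .
\]
The first convergence is exactly Lemma~\ref{lemma:Z_U_n} (which says $\mathbf{P}(B_{n}\mid A_{n})\to1$); combined with Theorem~\ref{Textinction} it also gives $\mathbf{P}(E_{n})\sim K\,\mathbf{P}(X>na)$. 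Hence everything reduces to proving $\mathbf{P}(B_{n})\sim K\,\mathbf{P}(X>na)$, which then yields the second convergence as well.

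To estimate $\mathbf{P}(B_{n})$, decompose over $\{U_{n}=j,\,Z_{j-1}=k\}$ exactly as in the proofs of Theorem~\ref{Textinction} and Lemma~\ref{lemma:Z_U_n}. With the events $A_{j-1}(n)$ of \eqref{A_events}, and using that $\{X_{j}>na\}$ and the reproduction at generation $j$ are independent of $(A_{j-1}(n),Z_{j-1})$, one has $\mathbf{P}(U_{n}=j,Z_{j-1}=k)=\mathbf{P}(Z_{j-1}=k,A_{j-1}(n))\,\mathbf{P}(X>na)$; moreover, conditionally on $\{U_{n}=j,\,Z_{j-1}=k\}$ the variable $N_{U_{n}}$ is the maximum of $k$ i.i.d.\ draws from $\pi_{j-1}$, with $\pi_{j-1}$ having the law of $\pi_{0}$ conditioned on $\{X>na\}$, so that $p_{k,n}:=\mathbf{P}(N_{U_{n}}\ge e^{n(a+\delta_{n})}\mid U_{n}=j,\,Z_{j-1}=k)$ does not depend on $j$. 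Therefore
\[
\frac{\mathbf{P}(B_{n})}{\mathbf{P}(X>na)}=\sum_{j<h_{n}}\sum_{k\ge1}\mathbf{P}\bigl(Z_{j-1}=k,\,A_{j-1}(n)\bigr)\,p_{k,n}.
\]
For each fixed $j,k$ we have $\mathbf{P}(Z_{j-1}=k,A_{j-1}(n))\uparrow\mathbf{P}(Z_{j-1}=k)$ as $n\to\infty$, while $p_{k,n}\to\mathbf{E}[1-\gamma^{k}]$ by the generalization of \eqref{eq:offspring_jump_prob} already invoked in the proof of Lemma~\ref{lemma:Z_U_n} — conditionally on $\{X>na\}$, $\mathbf{P}_{\pi_{0}}(Z_{1}<e^{n(a+\delta_{n})})$ converges in law to $\gamma$, hence so does its $k$-th power, and bounded convergence applies. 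Since $0\le p_{k,n}\le1$, the summands are dominated by $\mathbf{P}(Z_{j-1}=k)$ with $\sum_{j\ge1}\sum_{k\ge1}\mathbf{P}(Z_{j-1}=k)=\sum_{j\ge0}\mathbf{P}(Z_{j}>0)<\infty$, the finiteness being precisely the estimate $\mathbf{P}(Z_{j}>0)\le\mathbf{E}[e^{S_{\tau_{j}}}]$ from \eqref{TT11}, summable by Lemma~\ref{LExponent} and Proposition~\ref{Ttail} as in the proof of Corollary~\ref{C_minfinite}. Truncating the outer sum at $h_{n}$ (the tail over $j\ge h_{n}$ vanishes because $h_{n}\to\infty$) and applying dominated convergence gives
\[
\frac{\mathbf{P}(B_{n})}{\mathbf{P}(X>na)}\longrightarrow\sum_{j\ge1}\sum_{k\ge1}\mathbf{P}(Z_{j-1}=k)\,\mathbf{E}[1-\gamma^{k}]=\sum_{j\ge1}\mathbf{E}\bigl[1-f_{0,j-1}(\gamma)\bigr]=K,
\]
the last two equalities using the independence of $\gamma$ and $\{f_{0,j}\}$ exactly as in \eqref{RRR} and \eqref{DefK}.

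I expect the dominated-convergence step to be the only delicate point. To recover the exact constant $K$ — rather than the useless majorant $\limsup_{n}\mathbf{P}(B_{n})/\mathbf{P}(X>na)\le\sum_{j}\mathbf{P}(Z_{j}>0)$, which is strictly larger than $K$ whenever $\gamma\not\equiv0$ — one must run the argument keeping the pointwise limit $p_{k,n}\to\mathbf{E}[1-\gamma^{k}]$ in force, with a genuinely summable, $n$-uniform majorant; the existence of such a majorant rests on $\sum_{j}\mathbf{P}(Z_{j}>0)<\infty$, which is exactly where the non-Cramer tail \eqref{Ctail1} enters, via Proposition~\ref{Ttail} and Lemma~\ref{LExponent}. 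Apart from this, and the routine bookkeeping in exchanging the limit with the $j<h_{n}$ truncation, nothing further is needed: once $\mathbf{P}(B_{n})\sim K\,\mathbf{P}(X>na)$ is established, \eqref{eqn:tv_diff} follows immediately from the reduction in the first paragraph.
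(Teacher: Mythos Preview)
Your proof is correct, and the reduction in your first paragraph is essentially the content of Lemma~17 in \cite{lz08} that the paper invokes. Where you diverge from the paper is in establishing $\mathbf{P}(E_n)/\mathbf{P}(B_n)\to1$. The paper does this in one line by appealing to \eqref{eq:surv_certain}: on $B_n$ one has $Z_{U_n}\ge N_{U_n}\ge e^{n(a+\delta_n)}$ with $U_n<h_n\le n$, and since the post-$U_n$ environment is fresh i.i.d., \eqref{eq:surv_certain} applied to the shifted process gives $\mathbf{P}(Z_n>0\mid B_n)\to1$ directly, with no need to know $\mathbf{P}(B_n)$ itself. You instead compute $\mathbf{P}(B_n)\sim K\,\mathbf{P}(X>na)$ by rerunning the decomposition and dominated-convergence argument from the proof of Theorem~\ref{Textinction}, and then match it against $\mathbf{P}(E_n)\sim K\,\mathbf{P}(X>na)$. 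This is longer but perfectly valid; the dominated-convergence step is fine since your majorant $\mathbf{P}(Z_{j-1}=k)$ is $n$-free and summable by the argument of Corollary~\ref{C_minfinite}, and the pointwise limit $p_{k,n}\to\mathbf{E}[1-\gamma^k]$ is exactly what the paper itself asserts (without proof) in the proof of Lemma~\ref{lemma:Z_U_n}. Your route has the minor bonus of yielding the asymptotic $\mathbf{P}(B_n)\sim K\,\mathbf{P}(X>na)$ explicitly; the paper's route is shorter because it exploits \eqref{eq:surv_certain}, which you never use.
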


\begin{proof}
This follows from Lemma 17 in \cite{lz08}, %the previous two lemmas.
%Lemma \ref{lemma:jump_surv_prob} when $\gamma =0,$
relation \eqref{eq:surv_certain}, and the previous lemma.
\end{proof}

Lemma \ref{lemma:conditioning} shows that to prove the functional limit
theorems for the population size up to moment~$n$ conditioned on survival of
the process to this moment, we need only to establish limit theorems under
the condition $\{U_{n}<h_n,N_{U_{n}}\geq e^{n(a+\delta_n)}\}$.

\begin{lemma}
\label{lemma:var}
%On the event $G_{n}\cap H_{n}$ $($as defined in \eqref{dfn:Gn_Hn}$)$ we have for
%all sufficiently large $n$,
For all sufficiently large $n$, on the event $G_{n}\cap H_{n}$ $($as defined
in \eqref{dfn:Gn_Hn}$)$ we have
\begin{equation*}
\frac{\mathbf{E}_{\pi }\left[ Z_{n}^{2}\right] }{\exp (2S_{n})}\leq \exp
(na+2n/\log n).
\end{equation*}
\end{lemma}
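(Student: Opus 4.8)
The quantity $\mathbf{E}_\pi[Z_n^2]/\exp(2S_n)$ admits a classical representation in terms of the environment, analogous to the representation \eqref{surv_prob} for $\mathbf{P}_\pi(Z_n>0)$. First I would recall (see, e.g., \cite{GK00}) that for a branching process in a fixed environment $\pi$,
\begin{equation*}
\frac{\mathbf{E}_\pi[Z_n^2]}{\exp(2S_n)} = 1 + \sum_{k=0}^{n-1} \frac{f_k''(1)}{(f_k'(1))^2}\, e^{-S_{k+1}}\,e^{S_k} \cdot e^{S_k - S_n}\cdot(\text{telescoping factor}),
\end{equation*}
or, in the cleaner form actually needed, $\mathbf{E}_\pi[Z_n^2]\exp(-2S_n) = 1 + \sum_{k=0}^{n-1} 2\eta_{k+1}\, e^{-S_{k+1}}$, where $\eta_{k+1}=\eta(\pi_k)=f_k''(1)/(2(f_k'(1))^2)$ as in \eqref{dfn:eta}. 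This is derived by the standard one-step recursion $\mathbf{E}_\pi[Z_{n+1}^2\mid Z_n] = Z_n f_n'(1)^2 \cdot (\text{stuff}) + Z_n f_n''(1) + \dots$ followed by dividing by $e^{2S_{n+1}}$ and iterating; the point is that the resulting sum has exactly the same shape as the denominator appearing in \eqref{surv_prob}, with $g_k(f_{k+1,n}(0))$ replaced by $\eta_{k+1}$.

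Second, I would estimate this sum on the event $G_n\cap H_n$ exactly as in the lower-bound computation of Lemma~\ref{lemma:typical_surv_prob}. On $G_n$ we have $|S_k+ka|<n^{2/3}$ for all $1\le k\le n$, so $e^{-S_{k+1}}\le e^{(k+1)a+n^{2/3}}\le e^{na+n^{2/3}}$ for every $k\le n-1$; on $H_n$ we have $1+\sum_{k=1}^n\eta_k\le 2n e^{n/\log n}$. Combining,
\begin{equation*}
1 + 2\sum_{k=0}^{n-1}\eta_{k+1}e^{-S_{k+1}} \le e^{na+n^{2/3}}\Bigl(1+2\sum_{k=1}^{n}\eta_k\Bigr) \le e^{na+n^{2/3}}\cdot 4n e^{n/\log n} \le e^{na+2n/\log n}
\end{equation*}
for all sufficiently large $n$, since $n^{2/3}+\log(4n) = o(n/\log n)$. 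This is essentially identical to the chain of inequalities in \eqref{eq:bdd_on_GnHn}.

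The only genuine step is getting the representation for $\mathbf{E}_\pi[Z_n^2]\exp(-2S_n)$ with the bound $\eta_{k+1}$ on the coefficients; everything after that is the same bookkeeping on $G_n\cap H_n$ already carried out for the survival probability. I expect the main (minor) obstacle to be stating the second-moment formula in the correct normalized form and checking the coefficient is $\le 2\eta_{k+1}$ — but this is standard and the factor of $2$ is exactly what we built into the definition via $g_k(s)\le 2\eta_{k+1}$, so it should go through cleanly. No new probabilistic input beyond \eqref{dfn:Gn_Hn} and the FCLT/Assumption 1(i) estimates already used to prove \eqref{eq:GnHn_occur} is required.
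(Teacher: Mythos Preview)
Your approach is essentially the same as the paper's: derive the explicit second-moment representation and then bound it on $G_n\cap H_n$ exactly as in \eqref{eq:bdd_on_GnHn}. One small slip: the constant term in the normalized formula is $e^{-S_n}$, not $1$ (since $\mathbf{E}_\pi[Z_n^2]=f_{0,n}''(1)+\mathbf{E}_\pi[Z_n]=2e^{2S_n}\sum_{k=0}^{n-1}\eta_{k+1}e^{-S_{k+1}}+e^{S_n}$), and the coefficient $2\eta_{k+1}$ appears as an \emph{equality} here, not via the inequality $g_k(s)\le 2\eta_{k+1}$; neither point affects your bound, since $e^{-S_n}\le e^{na+n^{2/3}}$ on $G_n$ anyway.
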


\begin{proof}
Recall the generating functions $f_{k,n}(\cdot)$'s that we defined in %
\eqref{DefFF}. Clearly,
\begin{equation*}
f_{0,n}^{\prime }(s)=\prod_{k=0}^{n-1}f_{k}^{\prime }(f_{k+1,n}(s)),
\end{equation*}%
and%
\begin{equation*}
f_{0,n}^{\prime \prime }(s)
=\prod_{i=0}^{n-1}f_{i}^{\prime}(f_{i+1,n}(s))\cdot \sum_{k=0}^{n-1}\frac{%
f_{k}^{\prime \prime}(f_{k+1,n}(s))}{f_{k}^{\prime}(f_{k+1,n}(s))}
\prod\limits_{j=k+1}^{n-1}f_{j}^{\prime }(f_{j+1,n}(s)).
\end{equation*}%
Hence, letting $s=1$ we get%
\begin{equation*}
\mathbf{E}_{\pi }\left[ Z_{n}(Z_{n}-1)\right] = f_{0,n}^{\prime \prime }(1)
=2e^{2S_{n}}\cdot \sum_{k=0}^{n-1}\eta_{k+1}
e^{-S_{k+1}}.
\end{equation*}%
Thus,
\begin{equation*}
\mathbf{E}_{\pi }\left[ Z_{n}^{2}\right] =f_{0,n}^{\prime \prime }(1)+%
\mathbf{E}_{\pi }\left[ Z_{n}\right] =2e^{2S_{n}}\cdot \sum_{k=0}^{n-1}\eta_{k+1}
e^{-S_{k+1}}
+e^{S_{n}}
\end{equation*}%
implying%
\begin{equation*}
\frac{\mathbf{E}_{\pi }\left[ Z_{n}^{2}\right] }{\exp (2S_{n})}=2\cdot
\sum_{k=0}^{n-1}\eta_{k+1}
e^{-S_{k+1}}
+e^{-S_{n}}.
\end{equation*}%
The needed conclusion then follows
from an  argument similar to
\eqref{eq:bdd_on_GnHn}.
\end{proof}

For the following two lemmas we fix a sequence $h_n$ such that
\begin{equation}\label{eq:h}
h_n \to \infty, \q\mbox{and}\q  h_n/n \to 0.
\end{equation}
A simple  example of such a choice is $h_n = \log n.$ We also take $\delta_{n}$ to be a sequence satisfying
\begin{equation} \label{eq:delta}
\delta_{n}\rightarrow 0\q\mbox{and}\q n(\delta_{n}-2/\log n)\rightarrow~\infty.
\end{equation}

\begin{lemma}
\label{lemma:flt}
Suppose \eqref{eq:h} and \eqref{eq:delta} hold.
Then
\begin{equation*}
\aligned
&\mathcal{L}\left( \left. \frac{Z_{[nt]\vee U_{n}}}{Z_{U_{n}}\exp
(S_{[nt]\vee U_{n}}-S_{U_{n}})},0\leq t\leq 1\right\vert
U_{n}<h_n,N_{U_{n}}\geq e^{n(a+\delta_n)}\right)\\
 &\qquad\qquad\qquad\qquad\qquad\qquad\qquad\qquad\qquad\qquad\Longrightarrow
 \left(1,0\leq t\leq 1\right).
\endaligned
\end{equation*}
\end{lemma}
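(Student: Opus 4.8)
The plan is to show that, conditioned on the event $\mathcal{E}_n:=\{U_n<h_n,\ N_{U_n}\ge e^{n(a+\delta_n)}\}$, the ratio process $R_n(t):=Z_{[nt]\vee U_n}/\bigl(Z_{U_n}\exp(S_{[nt]\vee U_n}-S_{U_n})\bigr)$ converges to the constant process $1$ in $D[0,1]$. Since $R_n(t)=1$ for $t\le U_n/n$ and, by \eqref{eq:h}, $U_n/n\to 0$ on $\mathcal{E}_n$, it suffices to prove $\sup_{0\le t\le 1}|R_n(t)-1|\to 0$ in conditional probability; tightness in $D[0,1]$ then comes for free from this uniform convergence to a deterministic continuous limit. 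The key observation is that, given the environment $\pi$ and given $Z_{U_n}$, the process $k\mapsto Z_{U_n+k}$ is a branching process in the (shifted) environment started from the large initial population $Z_{U_n}\ge N_{U_n}\ge e^{n(a+\delta_n)}$, and $\exp(S_{[nt]\vee U_n}-S_{U_n})$ is precisely its conditional mean given $Z_{U_n}$. So $R_n$ is a normalized-by-its-mean branching process started from a huge population, and the claim is an (annealed) law-of-large-numbers statement.

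The steps I would carry out are as follows. First, reduce to a quenched estimate: condition on $\pi$ and on $Z_{U_n}$, and work on the good environment event $G_n\cap H_n$ of \eqref{dfn:Gn_Hn}, which has probability tending to $1$ (and, after conditioning on $\mathcal{E}_n$, still has conditional probability tending to $1$ by Theorem~\ref{Textinction} / the argument in Lemma~\ref{lemma:Z_U_n}, since the big jump occurs at a bounded time and the rest of the trajectory is typical). Second, for a branching process $\{W_k\}$ in a fixed environment with $W_0=m$, write $W_k/(m\,\mathbf{E}_\pi[W_k/m])$ and use the second-moment/martingale bound: a Doob $L^2$ maximal inequality applied to the martingale $W_k e^{-(S_{U_n+k}-S_{U_n})}/W_0$ gives
\begin{equation*}
\mathbf{P}_\pi\Bigl(\sup_{k}\bigl|R_n(U_n/n+k/n)-1\bigr|\ge \varepsilon\Bigr)\le \frac{C}{\varepsilon^2}\cdot\frac{1}{W_0}\cdot\sup_{0\le k\le n}\frac{\mathbf{E}_\pi[W_k^2]}{(\mathbf{E}_\pi[W_k])^2},
\end{equation*}
and Lemma~\ref{lemma:var} (applied to the shifted environment, noting the shift by a bounded time $U_n$ only changes the bound by a subexponential factor) bounds the last supremum by $\exp(na+O(n/\log n))$ on $G_n\cap H_n$. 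Third, observe that on $\mathcal{E}_n$ we have $W_0=Z_{U_n}\ge e^{n(a+\delta_n)}$, so the right-hand side is at most $\varepsilon^{-2}C\exp(-n(a+\delta_n)+na+O(n/\log n))=\varepsilon^{-2}C\exp(-n(\delta_n-O(1/\log n)))\to 0$ by \eqref{eq:delta}. Finally, integrate the quenched bound over the environment: the unconditional probability that $\sup_t|R_n(t)-1|\ge\varepsilon$ and $\mathcal{E}_n$ and $G_n\cap H_n$ all occur is $o(\mathbf{P}(X>na))=o(\mathbf{P}(Z_n>0))$ by Theorem~\ref{Textinction}, while $\mathcal{E}_n\setminus(G_n\cap H_n)$ has the same order of smallness; dividing by $\mathbf{P}(\mathcal{E}_n)\asymp\mathbf{P}(Z_n>0)$ gives the desired conditional convergence.

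The main obstacle, and the place deserving the most care, is making the martingale/second-moment argument uniform over $0\le t\le 1$ in the Skorokhod space rather than just pointwise in $t$: one must control the supremum over the (random, environment-dependent) partition $\{U_n+k:0\le k\le n-U_n\}$, and verify that the single maximal inequality above really does dominate the running maximum of $|R_n(t)-1|$, using that between consecutive integer times $R_n$ is constant. A secondary subtlety is the shift: Lemma~\ref{lemma:var} is stated for the process started at time $0$, so I need to note that restarting the environment at the bounded time $U_n<h_n$ (with $h_n/n\to0$) and at the huge population $Z_{U_n}$ preserves the bound up to a factor $e^{o(n)}$, which is harmless against the exponentially small $1/Z_{U_n}$. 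Neither of these is conceptually deep, but they are where the proof must be written carefully.
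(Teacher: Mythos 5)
Your proposal is correct and rests on the same core ingredients as the paper's proof: condition on $\mathcal{E}_n=\{U_n<h_n,\,N_{U_n}\ge e^{n(a+\delta_n)}\}$, use the post-$U_n$ environment events $G_n'\cap H_n'$ (which occur with conditional probability tending to $1$, since $\pi'=\{\pi_{U_n},\pi_{U_n+1},\ldots\}$ is again i.i.d.\ with the distribution of $\pi$ and is independent of $\mathcal{E}_n$ given $U_n$), apply Lemma~\ref{lemma:var} to bound the normalized second moment by $e^{na+2n/\log n}$, and divide by the large initial population $Z_{U_n}\ge e^{n(a+\delta_n)}$ so the conditional variance of the ratio martingale vanishes thanks to \eqref{eq:delta}. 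The one place you diverge from the paper is in how process-level convergence in $D[0,1]$ is extracted from the variance bound: you invoke Doob's $L^2$ maximal inequality to control $\sup_{0\le t\le 1}|R_n(t)-1|$ directly and then observe that convergence in Skorokhod topology to the constant path follows, whereas the paper proves marginal convergence by Chebyshev, upgrades to finite-dimensional convergence via Slutsky, and then obtains tightness via Aldous's criterion for martingales, which only needs uniform integrability of $R_n(t)$ at each fixed $t$. Both are standard $L^2$-martingale routes; yours is a bit more economical (one maximal inequality instead of two separate steps), while the paper's phrasing makes the $D[0,1]$ machinery (Aldous's Proposition~1.2) explicit. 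The subtleties you flag — that the supremum over $t$ reduces to a max over the integer times where the piecewise-constant $R_n$ can change, and that the shift of the environment to time $U_n$ must be handled — are real but handled exactly as you describe: the first is automatic because $R_n$ is constant between integer times, and the second is handled cleanly by defining $G_n',H_n'$ for the shifted environment rather than arguing about a multiplicative $e^{o(n)}$ correction.
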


\begin{proof}
Let $\pi ^{\prime }= %\{\pi _{U_{n}+1},\pi _{U_{n}+2},\ldots \}$
\{\pi _{U_{n}},\pi _{U_{n}+1},\ldots \}$ be the random environment after
time $U_{n}$, and $S_{m}^{\prime}:=S_{m+U_{n}}-S_{U_{n}}$ be the random walk
after time $U_{n}$. Further, define $G_{n}^{\prime }$ and $H_{n}^{\prime }$
for the random environment~$\pi ^{\prime }$ in the same way as in %
\eqref{dfn:Gn_Hn}. Then, by \eqref{eq:GnHn_occur}, as $n\rightarrow \infty $%
, $G_{n}^{\prime }\cap H_{n}^{\prime }$ occurs with probability approaching
one, so we need only to prove the convergence on the event $G_{n}^{\prime
}\cap H_{n}^{\prime }$.

We first prove the marginal convergence, by a mean-variance calculation.
Denote $k=[nt]$.
By our assumption $U_{n}\leq h_n$ for an $h_n/n\to 0$, implying $k=[nt]>U_{n}$ for
all $t>0$ and for all $n$ big enough. Hence
\begin{equation*}
\mathbf{E}_{\pi }(Z_{k}|Z_{{U_{n}}})=Z_{U_{n}}\cdot \exp (S_{k-{U_{n}}%
}^{\prime });
\end{equation*}%
moreover, by Lemma \ref{lemma:var}, for all $n$ big enough, on the event $%
G_{n}^{\prime }\cap H_{n}^{\prime }$,
\begin{equation*}
\var_{\pi }(Z_{k}|Z_{U_{n}}) =Z_{U_{n}}\cdot \var_{\pi
^{\prime}}(Z_{k-U_{n}}|Z_{0}=1)\leq Z_{U_{n}} \cdot \exp (2S_{k-{U_{n}}%
}^{\prime})\cdot \exp (na+2n/\log n).
\end{equation*}%
Consequently, when $N_{U_{n}}\geq \exp(n(a+\delta_n))$ and, therefore, $%
Z_{U_{n}}\geq \exp(n(a+\delta_n))$, we have
\begin{equation*}
\var_{\pi }\left( \left. \frac{Z_{k}}{Z_{U_{n}}\cdot \exp (S_{k-{U_{n}}%
}^{\prime })}\right\vert Z_{U_{n}}\right) \leq \frac{\exp (na+2n/\log n)}{%
Z_{U_{n}}}\leq \frac{\exp (na+2n/\log n)}{\exp(n(a+\delta_n))}%
\rightarrow 0.
\end{equation*}

Next, by Slutsky's theorem (see, e.g., \cite{Ferguson96}) we have
convergence of finite dimensional distributions. Furthermore, since $%
Z_{[nt]\vee {U_{n}}}/(Z_{U_{n}}\exp (S_{[nt]\vee {U_{n}}-U_{n}}^{\prime }))$
are martingales (with respect to the post-$U_{n}$ sigma field $\mathcal{F}%
_{([nt]\vee U_{n})}^{\pi }$, where $\mathcal{F}_{i}^{\pi }=\sigma \langle
\pi ;Z_{j},j\leq i\rangle $), to prove the convergence in the space $D[0,1]$
we need only to show, by Proposition~1.2 in \cite{Aldous89}, the uniform
integrability of $\{Z_{[nt]\vee {U_{n}}}/(Z_{U_{n}}\exp (S_{[nt]\vee {U_{n}}%
-U_{n}}^{\prime }))\}$ for any fixed $t$. This follows from the above
calculation, demonstrating that the elements of the martingale sequence in
question are bounded in $L^{2}$.
\end{proof}

The lemma just proved serves as an LLN type result; the following lemma
gives the CLT type statement.

\begin{lemma}
\label{lemma:flt_2nd}
Suppose \eqref{eq:h} and \eqref{eq:delta} hold. Then
\begin{equation*}
\aligned
&\mathcal{L}\left( \left. \frac{1}{\sigma \sqrt{n}}\left( \log \left(
Z_{[nt]\vee {U_{n}}}/Z_{U_{n}}\right) +nta\right) ,0\leq t\leq 1\right\vert
U_{n}\leq h_n, N_{U_{n}}\geq e^{n(a+\delta_n)}\right)\\
  &\qquad\qquad\qquad\qquad\qquad\qquad\qquad\qquad\qquad\qquad\qquad\quad\Longrightarrow (B_{t},0\leq t\leq 1),
 \endaligned
\end{equation*}%
and for any $\varepsilon >0$,
\begin{equation*}
\aligned
&\mathcal{L}\left( \left. \frac{1}{\sigma \sqrt{n}}\left( \log \left(
Z_{[nt]}/Z_{[n\varepsilon ]}\right) +n(t-\varepsilon )a\right) ,\varepsilon
\leq t\leq 1\right\vert U_{n}\leq h_n, N_{U_{n}}\geq e^{n(a+\delta_n)}
\right)\\
 &\qquad\qquad\qquad\qquad\qquad\qquad\qquad\qquad\qquad\qquad\qquad\Longrightarrow (B_{t}-B_{\varepsilon },\varepsilon \leq t\leq 1),
 \endaligned
\end{equation*}
where $\sigma $ is the standard deviation of $X$, and $B_{t}$ is a standard
Brownian motion.
\end{lemma}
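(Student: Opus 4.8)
Write $C_{n}:=\{U_{n}<h_{n},\,N_{U_{n}}\geq e^{n(a+\delta_{n})}\}$ for the conditioning event and $B$ for a standard Brownian motion, and denote by $\Xi^{n}$ the process appearing in the first display of the lemma, $\Xi^{n}_{t}:=(\sigma\sqrt{n})^{-1}(\log(Z_{[nt]\vee U_{n}}/Z_{U_{n}})+nta)$. The plan is to transfer the problem to the associated random walk using Lemma~\ref{lemma:flt}, and then apply Donsker's theorem to the increments occurring after the big jump. First I would invoke Lemma~\ref{lemma:flt}: under $\mathbf{P}(\cdot\,|\,C_{n})$ the ratio $Z_{[nt]\vee U_{n}}/(Z_{U_{n}}\exp(S_{[nt]\vee U_{n}}-S_{U_{n}}))$ converges to the constant path $1$ in $D[0,1]$, hence (the limit being continuous) uniformly; in particular this ratio is eventually bounded away from $0$ and $\infty$, so the logarithms are well defined on an event of conditional probability $\to1$ and $\sup_{0\le t\le1}\bigl|\log(Z_{[nt]\vee U_{n}}/Z_{U_{n}})-(S_{[nt]\vee U_{n}}-S_{U_{n}})\bigr|\to0$ in $\mathbf{P}(\cdot\,|\,C_{n})$-probability. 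Setting $W^{n}_{t}:=(\sigma\sqrt{n})^{-1}(S_{[nt]\vee U_{n}}-S_{U_{n}}+nta)$, this shows $\sup_{t}|\Xi^{n}_{t}-W^{n}_{t}|\to0$ in conditional probability, so it suffices to prove $W^{n}\Rightarrow(B_{t})$ in $D[0,1]$ under $\mathbf{P}(\cdot\,|\,C_{n})$.

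Next I would decompose $W^{n}$. With $\tilde S'_{m}:=\sum_{i=1}^{m}(X_{U_{n}+i}+a)$ one checks that $S_{[nt]\vee U_{n}}-S_{U_{n}}+nta=\tilde S'_{([nt]-U_{n})^{+}}+r^{(n)}_{t}$, where $r^{(n)}_{t}=a(U_{n}+nt-[nt])$ when $[nt]\ge U_{n}$ and $r^{(n)}_{t}=nta$ when $[nt]<U_{n}$, so that $\sup_{0\le t\le1}|r^{(n)}_{t}|\le a(U_{n}+1)$ in all cases. The crucial structural point is that \emph{conditionally on $C_{n}$ the sequence $(X_{U_{n}+1},X_{U_{n}+2},\dots)$ consists of i.i.d.\ copies of $X$ and is independent of $U_{n}$}: writing $C_{n}=\bigcup_{j}C_{n}^{(j)}$ with $C_{n}^{(j)}=\{U_{n}=j\}\cap\{N_{j}\ge e^{n(a+\delta_{n})}\}$, the event $\{U_{n}=j\}$ depends only on $\pi_{0},\dots,\pi_{j-1}$ and $N_{j}$ only on $\pi_{j-1}$ and the generation-$j$ offspring variables, so $C_{n}^{(j)}$ is measurable with respect to a $\sigma$-field not involving $(\pi_{j},\pi_{j+1},\dots)$, whereas $(X_{j+1},X_{j+2},\dots)$ is a deterministic function of that i.i.d.\ tail of the environment; summing over $j$ gives the claim. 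In particular, under $\mathbf{P}(\cdot\,|\,C_{n})$, $\tilde S'$ is a mean-zero random walk with increment variance $\sigma^{2}$, independent of $U_{n}$.

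To conclude the first assertion I would note that, by Lemma~\ref{lemma:conditioning}, $\mathbf{P}(\cdot\,|\,C_{n})$ agrees with $\mathbf{P}(\cdot\,|\,Z_{n}>0)$ up to an $o(1)$ total-variation error, so \eqref{eq:U_n_UAC} makes $U_{n}$ tight under $\mathbf{P}(\cdot\,|\,C_{n})$; hence $\sup_{t}|r^{(n)}_{t}|/(\sigma\sqrt n)\to0$ in probability and it only remains to prove $(\sigma\sqrt n)^{-1}\tilde S'_{([nt]-U_{n})^{+}}\Rightarrow B$. Conditionally on a fixed $C_{n}^{(j)}$ the increments $X_{j+1},X_{j+2},\dots$ are genuine i.i.d.\ copies of $X$, so Donsker's theorem gives $(\sigma\sqrt n)^{-1}\tilde S'_{[n\cdot]}\Rightarrow B$, and shifting the index by the fixed amount $j$ changes the path uniformly by at most $j\max_{1\le i\le n}|X_{U_{n}+i}+a|$, which over $\sqrt n$ tends to $0$ in probability since $\mathbf{E}[(X+a)^{2}]<\infty$; thus $(\sigma\sqrt n)^{-1}\tilde S'_{([nt]-j)^{+}}\Rightarrow B$ under $\mathbf{P}(\cdot\,|\,C_{n}^{(j)})$ as well. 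Writing, for bounded continuous $F$ on $D[0,1]$, $\mathbf{E}[F((\sigma\sqrt n)^{-1}\tilde S'_{([n\cdot]-U_{n})^{+}})\,|\,C_{n}]=\sum_{j}\mathbf{P}(U_{n}=j\,|\,C_{n})\,\mathbf{E}[F((\sigma\sqrt n)^{-1}\tilde S'_{([n\cdot]-j)^{+}})\,|\,C_{n}^{(j)}]$, and using that $\mathbf{P}(U_{n}=j\,|\,C_{n})\to\mathbf{E}[1-f_{0,j-1}(\gamma)]/K$ (Lemmas~\ref{lemma:U_n} and~\ref{lemma:conditioning}) with these limits summing to $1$, Scheff\'{e}'s lemma passes the limit through the sum and yields $W^{n}\Rightarrow B$, hence $\Xi^{n}\Rightarrow B$. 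For the second assertion, on $C_{n}$ and for all large $n$ one has $U_{n}<h_{n}<[n\varepsilon]\le[nt]$ for every $t\in[\varepsilon,1]$, so $[nt]\vee U_{n}=[nt]$ and $[n\varepsilon]\vee U_{n}=[n\varepsilon]$, and therefore the process in the second display equals $\Xi^{n}_{t}-\Xi^{n}_{\varepsilon}$ on $[\varepsilon,1]$; since $w\mapsto(w(t)-w(\varepsilon))_{\varepsilon\le t\le1}$ is continuous at every $w\in C[0,1]$ and $B$ has continuous paths, the continuous mapping theorem gives convergence to $(B_{t}-B_{\varepsilon})$. I expect the main obstacle to be the conditioning analysis in the middle step: establishing, despite the somewhat unusual shape of $C_{n}$, that the post-jump random-walk increments are conditionally i.i.d.\ and independent of $U_{n}$, and then handling the random index shift by $U_{n}$ in Donsker's theorem — both of which rest on the tightness of $U_{n}$ under the conditioning.
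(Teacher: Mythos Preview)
Your proof is correct and follows the same strategy as the paper's: reduce from $Z$ to $S$ via Lemma~\ref{lemma:flt} and the continuous mapping theorem, then invoke the functional CLT for the post-$U_{n}$ random walk. The paper's proof is terse, simply stating that the conclusions follow from ``the standard functional central limit theorem for the post-$U_{n}$ random walk $\{S_{[nt]\vee U_{n}}-S_{U_{n}}\}$''; you have made explicit the points that are left implicit there, namely the conditional i.i.d.\ structure of the post-jump increments, the tightness of $U_{n}$ under $\mathbf{P}(\cdot\,|\,C_{n})$ (transferred from Lemma~\ref{lemma:U_n} via Lemma~\ref{lemma:conditioning}), and the Scheff\'e-type passage through the sum over $j$.
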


\begin{proof}
By Lemma \ref{lemma:flt} and the continuous mapping theorem,
\begin{equation*}
\aligned &\mathcal{L}\left( \left. \log \left( Z_{[nt]\vee {U_{n}}%
}/Z_{U_{n}}\right) -(S_{[nt]\vee U_{n}}-S_{U_{n}}),0\leq t\leq 1\right\vert
U_{n}\leq h_n,N_{U_{n}}\geq e^{n(a+\delta_n)}\right) \\
&\qquad\qquad\qquad\qquad\qquad\qquad\qquad\qquad\qquad\qquad\qquad\qquad\qquad\Longrightarrow  \left( 0,0\leq t\leq 1\right).
\endaligned
\end{equation*}%
The conclusions then follow from the standard functional central limit
theorem for the post-$U_n$ random walk $\{S_{[nt]\vee U_{n}}-S_{U_{n}}\}$.
\end{proof}

Now we are ready to prove Theorem \ref{thm:FLT}.

\begin{proof}[Proof of Theorem \protect\ref{thm:FLT}]
The first claim follows from Lemma \ref{lemma:U_n}. The second statement
follows from Lemmas \ref{lemma:conditioning}, \ref{lemma:flt} and \ref%
{lemma:flt_2nd}.
\end{proof}

\section*{Acknowledgments}
We thank the anonymous referee for his/her valuable suggestions that help improve the article.

\bibliographystyle{elsarticle-harv}
%\bibliography{<your-bib-database>}

%% Authors are advised to submit their bibtex database files. They are
%% requested to list a bibtex style file in the manuscript if they do
%% not want to use elsarticle-harv.bst.

%% References without bibTeX database:

% \begin{thebibliography}{00}

%% \bibitem must have one of the following forms:
%%   \bibitem[Jones et al.(1990)]{key}...
%%   \bibitem[Jones et al.(1990)Jones, Baker, and Williams]{key}...
%%   \bibitem[Jones et al., 1990]{key}...
%%   \bibitem[\protect\citeauthoryear{Jones, Baker, and Williams}{Jones
%%       et al.}{1990}]{key}...
%%   \bibitem[\protect\citeauthoryear{Jones et al.}{1990}]{key}...
%%   \bibitem[\protect\astroncite{Jones et al.}{1990}]{key}...
%%   \bibitem[\protect\citename{Jones et al., }1990]{key}...
%%   \harvarditem[Jones et al.]{Jones, Baker, and Williams}{1990}{key}...
%%

% \bibitem[ ()]{}

% \end{thebibliography}

\end{document}